\newtheorem{theorem}{Theorem}[section]
\newtheorem{lemma}[theorem]{Lemma}
\newtheorem{corollary}[theorem]{Corollary}
\newtheorem{question}[theorem]{Question}
\theoremstyle{definition}
\newtheorem{definition}[theorem]{Definition}
\newtheorem{proposition}[theorem]{Proposition}
\theoremstyle{remark}
\newtheorem{example}[theorem]{Example}
\begin{document}
\noindent \vspace{0.5in}

\title[Some topological properties of Charming spaces]
{Some topological properties of Charming spaces}

\author{Xiaoting Li}
\address{(Xiaoting Li): School of mathematics and statistics,
Minnan Normal University, Zhangzhou 363000, P. R. China}

\author{Fucai Lin}
\address{(Fucai Lin): School of mathematics and statistics,
Minnan Normal University, Zhangzhou 363000, P. R. China}
\email{linfucai2008@aliyun.com; lfc19791001@163.com}

\author{Shou Lin}
\address{(Shou Lin): Institute of Mathematics, Ningde Teachers' College, Ningde, Fujian
352100, P. R. China}
\email{shoulin60@163.com}
\thanks{The authors are supported by the NSFC (Nos. 11201414, 11471153, 11571158), the Natural Science Foundation of Fujian Province (No. 2012J05013) of China and Training Programme Foundation for Excellent Youth Researching
Talents of Fujian's Universities (JA13190).}

\keywords{Charming space; $(i, j)$-structured space; Lindel\"{o}f $\Sigma$-space; Suslin number; rectifiable space.}
\subjclass[2000]{54E20; 54E35; 54H11; 22A05}

\begin{abstract}
In this paper, we mainly discuss the class of charming spaces, which was introduced by A.V. Arhangel'skii in [Remainders of metrizable spaces and a generalization of Lindel\"{o}f $\Sigma$-spaces,
Fund. Math., 215(2011), 87-100].  We first show that there exists a charming space $X$ such that $X^{2}$ is not a charming space. Moreover, we discuss some properties of charming spaces and give some characterizations of some special class of charming spaces. Finally, we show that the Suslin number of an arbitrary charming rectifiable space is countable.
\end{abstract}
\maketitle

\section{Introduction}
All the spaces are Tychonoff unless stated otherwise. Readers may refer to
\cite{E1989, Gr} for notations and terminology not
explicitly given here.

In 1969, Nagami in \cite{K1969} introduced the notion of  $\Sigma$-spaces, and then the class of $\Sigma$-spaces with the Lindel\"{o}f property (i.e. the class of Lindel\"{o}f $\Sigma$-spaces) quickly attracted the attention of some topologists. From then on, the study of Lindel\"{o}f $\Sigma$-spaces has become an important part in the functional analysis, topological algebra and descriptive set theory. V.V.Tkachuk in \cite{T2010} detailly described the Lindel\"{o}f $\Sigma$-spaces and  made an overview of the recent progress achieved in the study of Lindel\"{o}f $\Sigma$-spaces. Recently, A.V.Arhangel'skii has proved the following theorem when he studied the properties of remainders of metrizable spaces.

\begin{theorem}\cite{A0}
Let $X$ be a metrizable space. If the weight of $X $does not exceed $2^\omega$, then any remainders of $X$ in a Hausdorff compactification is a Lindel\"{o}f $\Sigma$-space.
\end{theorem}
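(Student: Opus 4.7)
The plan is to exhibit a single Hausdorff compactification of $X$ whose remainder is Lindel\"{o}f $\Sigma$, and then promote this to all compactifications via an invariance principle. A core lemma I would want to invoke is that among Hausdorff compactifications of a fixed Tychonoff space, the Lindel\"{o}f $\Sigma$ property of the remainder does not depend on the chosen compactification; this follows by mapping any two compactifications $b_1X$ and $b_2X$ through $\beta X$ via perfect surjections, restricting these maps to the remainders (which yields perfect maps again, since the restrictions fix $X$ pointwise), and using that the Lindel\"{o}f $\Sigma$ property is preserved in both directions under perfect maps.

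Given this reduction, a natural candidate is $bX:=\beta\widetilde X$, where $\widetilde X$ is the metric completion of $X$; this is a compactification of $X$ as well, since $X$ is dense in $\widetilde X$. Because $\widetilde X$ is completely metrizable, hence \v{C}ech-complete, $\widetilde X$ is a $G_\delta$-subset of $\beta\widetilde X$, so $\beta\widetilde X\setminus\widetilde X$ is $\sigma$-compact, in particular Lindel\"{o}f $\Sigma$. The full remainder of $X$ decomposes as
\[
\beta\widetilde X\setminus X=(\beta\widetilde X\setminus\widetilde X)\cup(\widetilde X\setminus X),
\]
reducing the problem to showing that the metric residue $\widetilde X\setminus X$ is Lindel\"{o}f $\Sigma$ as a subspace of $bX$.

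The main obstacle is precisely this last step, and it is where the weight bound $w(X)\leq 2^\omega$ must enter in an essential way. Since the subspace topology inherited by $\widetilde X\setminus X$ from $bX$ coincides with its ambient metric topology, a naive completion-based argument faces a real difficulty: a metrizable space is Lindel\"{o}f $\Sigma$ only when it is second countable. To make headway one likely has to replace $\beta\widetilde X$ by a compactification adapted to a $\sigma$-discrete base $\bigcup_{n<\omega}\mathcal B_n$ of $\widetilde X$ of total cardinality at most $2^\omega$, so that appropriate closures in $bX$ of the discrete strata $\mathcal B_n$ absorb the metric residue into a countable union of compact sets, or into a subspace admitting a countable network modulo a compact cover. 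Carrying out this construction, and verifying that the hypothesis $w(X)\leq 2^\omega$ is what exactly controls the cardinality of the required network, is the heart of the proof, and I would anticipate non-trivial technical work at this point; once it is achieved, the invariance principle from the first paragraph transfers the conclusion to every Hausdorff compactification.
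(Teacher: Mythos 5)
The paper does not actually prove this statement: it is quoted verbatim from Arhangel'skii's paper \cite{A0} as background, so your proposal can only be judged on its own terms. Your first two steps are fine: the invariance of the Lindel\"{o}f $\Sigma$ property of the remainder under change of compactification is the standard Henriksen--Isbell-type argument (the natural map $\beta X\to bX$ restricts to a perfect surjection of remainders, and Lindel\"{o}f $\Sigma$ is preserved by perfect images and perfect preimages), and $\beta\widetilde X\setminus\widetilde X$ is indeed $\sigma$-compact because $\widetilde X$ is \v{C}ech-complete.

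The gap is that your ``reduction'' to showing $\widetilde X\setminus X$ is Lindel\"{o}f $\Sigma$ is a reduction to a statement that is simply false in general, so the decomposition buys you nothing. Take $D$ discrete of cardinality $\omega_1\leq 2^\omega$ and $X=D\times\mathbb{Q}$, a metrizable space of weight $\omega_1$ that is dense in the complete metric space $M=D\times\mathbb{R}$; then $\widetilde X=M$ and $\widetilde X\setminus X=D\times(\mathbb{R}\setminus\mathbb{Q})$ is a non-separable metrizable space, hence not Lindel\"{o}f $\Sigma$ (as you yourself observe, metrizable Lindel\"{o}f $\Sigma$-spaces are second countable). Yet Arhangel'skii's theorem asserts that the full remainder $\beta M\setminus X$ \emph{is} Lindel\"{o}f $\Sigma$; there is no contradiction only because the class is not hereditary, which shows that splitting the remainder into $(\beta\widetilde X\setminus\widetilde X)\cup(\widetilde X\setminus X)$ and treating the pieces separately cannot succeed. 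The final paragraph, where the hypothesis $w(X)\leq 2^\omega$ would have to do its work, is an acknowledgement of this difficulty rather than an argument: no construction is given, and nothing in the proposal ever uses the weight bound. The heart of the theorem --- producing a countable network modulo a compact cover for the whole remainder, which is where Arhangel'skii exploits $w(X)\leq 2^\omega$ --- is missing.
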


It is natural to ask if we can find a class of spaces $\mathscr{P}$ such that each remainder of a Hausdorff compactification of an arbitrary metrizable space belongs to $\mathscr{P}$. Therefore, A.V.Arhangel'skii defined the following concept of charming spaces.

\begin{definition}\cite{A0}
Let $X$ be a space. If there exists a Lindel\"{o}f $\Sigma$-subspace $Y$ such that for each open neighborhood $U$ of $Y$ in $X$ the set $X\setminus U$ is also a Lindel\"{o}f $\Sigma$-subspace, then we say that $X$ is a {\it charming space}.
\end{definition}

Then he showed the following theorem.

\begin{theorem}\cite{A0}
Let $X$ be a paracompact $p$-space. Then any remainder of $X$ in a Hausdorff compactification is a charming space.
\end{theorem}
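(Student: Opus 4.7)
The plan is to reduce the claim to the metrizable case via the perfect-map characterization of paracompact $p$-spaces, and then to pin down the charming witness inside the metric remainder with the help of Theorem~1.1. The reduction splits into two routine stages, after which the substantive work lies entirely in the metric case.

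First I would pass from an arbitrary compactification $bX$ to $\beta X$. The universal property of $\beta X$ gives a continuous surjection $\phi\colon\beta X\to bX$ that is the identity on $X$, which restricts to a continuous surjection $\phi_{0}\colon\beta X\setminus X\to bX\setminus X$. The class of charming spaces is preserved by continuous surjections: if $Z$ is a witness in $A$ and $f\colon A\to B$ is a continuous surjection, then $f(Z)$ is Lindel\"{o}f $\Sigma$ (continuous image of Lindel\"{o}f $\Sigma$), and for any open $V\supseteq f(Z)$ the identity $f(A\setminus f^{-1}(V))=B\setminus V$ exhibits the complement as a continuous image of the Lindel\"{o}f $\Sigma$-set $A\setminus f^{-1}(V)$. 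It therefore suffices to prove $\beta X\setminus X$ is charming.

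Second, I would use a perfect map $\pi\colon X\to M$ onto a metrizable space $M$, provided by the definition of paracompact $p$-space. The \v{C}ech extension $\beta\pi\colon\beta X\to\beta M$ is perfect (continuous between compact Hausdorff spaces), and the perfectness of $\pi$ yields the standard equality $\beta\pi^{-1}(M)=X$. Hence $\beta\pi$ restricts to a perfect surjection $g\colon\beta X\setminus X\to\beta M\setminus M$. Charming transfers through perfect preimages: given a witness $Z_{M}$ in $\beta M\setminus M$, set $Z:=g^{-1}(Z_{M})$, which is Lindel\"{o}f $\Sigma$ as a perfect preimage of such a space. For any open $U\supseteq Z$ in $\beta X\setminus X$, the closedness of $g$ together with $g^{-1}(Z_{M})\subseteq U$ makes $g((\beta X\setminus X)\setminus U)$ a closed subset of $\beta M\setminus M$ disjoint from $Z_{M}$; its complement $V$ is an open neighborhood of $Z_{M}$, and $(\beta M\setminus M)\setminus V$ is Lindel\"{o}f $\Sigma$ by the charming hypothesis on $\beta M\setminus M$. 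Then $(\beta X\setminus X)\setminus U$ sits as a closed subspace of the perfect preimage $g^{-1}((\beta M\setminus M)\setminus V)$, hence is itself Lindel\"{o}f $\Sigma$.

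The whole theorem thereby reduces to showing that $\beta M\setminus M$ is charming for every metrizable $M$. When $w(M)\le 2^{\omega}$ this is immediate from Theorem~1.1, as the entire remainder is Lindel\"{o}f $\Sigma$. The hard part I foresee will be the case $w(M)>2^{\omega}$: the natural candidate for a witness is $Z_{M}:=\beta M_{0}\setminus M_{0}$ for a carefully chosen closed subspace $M_{0}\subseteq M$ of weight at most $2^{\omega}$. Since closed subsets of the normal space $M$ are $C^{*}$-embedded, $\overline{M_{0}}^{\beta M}\cong\beta M_{0}$, so $Z_{M}$ embeds in $\beta M\setminus M$ and is Lindel\"{o}f $\Sigma$ by Theorem~1.1 applied to $M_{0}$. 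The decisive step is the verification of the charming condition for $Z_{M}$: that $(\beta M\setminus M)\setminus U$ is Lindel\"{o}f $\Sigma$ for every open $U\supseteq Z_{M}$. This will require a genuinely metric argument, likely exploiting a $\sigma$-discrete base of $M$ (or an equivalent stratification) to decompose $M\setminus M_{0}$ into countably many discrete layers whose contributions to $\beta M\setminus M$ assemble into a countable union of Lindel\"{o}f $\Sigma$-spaces, yielding the Lindel\"{o}f $\Sigma$-ness of the complement.
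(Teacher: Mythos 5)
The paper itself gives no proof of this statement (it is quoted directly from Arhangel'skii's paper \cite{A0}), so your attempt can only be judged on its own terms. Your two reduction stages are correct and are essentially the paper's Propositions 3.2 and 3.3 specialized to $(4,4)$-structured spaces: the natural map $\beta X\to bX$ carries the remainder onto the remainder, charming is preserved by continuous surjections, and the Henriksen--Isbell fact $(\beta\pi)^{-1}(M)=X$ for a perfect $\pi$ lets you pull the problem back through a perfect surjection of remainders, under which charming is also preserved. All of that is sound.

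The genuine gap is that after these reductions nothing of substance is proved. The theorem's entire content now sits in the claim that $\beta M\setminus M$ is charming for every metrizable $M$, and for $w(M)>2^{\omega}$ --- the only case not already settled by Theorem~1.1 --- you offer no argument: the closed subspace $M_{0}$ is ``carefully chosen'' without saying how, and the decisive verification that $(\beta M\setminus M)\setminus U$ is Lindel\"{o}f $\Sigma$ for every open $U\supseteq\beta M_{0}\setminus M_{0}$ is replaced by a forecast that it ``will require a genuinely metric argument, likely exploiting a $\sigma$-discrete base.'' That missing step is precisely Arhangel'skii's theorem that remainders of metrizable spaces are charming, i.e.\ the result being generalized; it is not clear that a witness of the form $\beta M_{0}\setminus M_{0}$ for a single $M_{0}$ of weight at most $2^{\omega}$ even works, and no decomposition of $M\setminus M_{0}$ is actually produced. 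Until that case is carried out, the proof is a correct reduction of the statement to an unproved statement of essentially the same depth.
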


Indeed, A.V.Arhangel'skii defined many new classes of spaces (that is, ($i, j$)-structured spaces) which have similar structures with the class of charming spaces, and he said that each of the classes of spaces so defined is worth studying. Therefore, we mainly discuss some topological properties of ($i, j$)-structured spaces.

\section{Preliminaries}
Let $X$ be a space, and let $\mathscr{N}$ be a family of subsets of a space $X$. Then the family $\mathscr{N}$ is a {\it network} of $X$ if every open subset $U$ is the union of some subfamily of $\mathscr{N}$.
We say that $X$ is a {\it Lindel\"{o}f $p$-space} if it is the preimage of a separable metrizable space under a perfect map; $X$ is a
{\it Lindel\"{o}f $\Sigma$-space} if there exists a space $Y$ which maps continuously onto $X$ and perfectly onto a second countable space, that is, a Lindel\"{o}f $\Sigma$-space is the continuous onto image of some Lindel\"{o}f $p$-space. Therefore, a Lindel\"{o}f $p$-space is a Lindel\"{o}f $\Sigma$-space. It is well-known that the class of Lindel\"{o}f $\Sigma$-spaces contains the classes of $\sigma$-compact spaces and spaces with a countable network.

Let $\mathscr{P}$ and $\mathscr{Q}$ be two classes of topological spaces respectively. A space $X$ will be called $(\mathscr{P}, \mathscr{Q})$-{\it structured} \cite{A0} if there is a subspace $Y$ of $X$ such that $Y\in\mathscr{P}$, and for each open neighborhood $U$ of $Y$ in $X$, the subspace $X\setminus U$ of $X$ belongs to $\mathscr{Q}$. In this situation, we call $Y$ a $(\mathscr{P}, \mathscr{Q})$-{\it shell} \cite{A0} of the space $X$.

Throughout this paper, we always let $\mathscr{P}_{0}$ be the class of $\sigma$-compact spaces, $\mathscr{P}_{1}$ be the class of separable metrizable spaces, $\mathscr{P}_{2}$ be the class of spaces with a countable network,  $\mathscr{P}_{3}$ be the class of Lindel\"{o}f $p$-spaces, $\mathscr{P}_{4}$ be the class of Lindel\"{o}f $\Sigma$-spaces and $\mathscr{P}_{5}$ be the class of compact spaces. For $i, j\in\{0, 1, 2, 3, 4, 5\}$, a space $X$ will be called $(i, j)$-{\it structured} \cite{A0} if it is $(\mathscr{P}_{i}, \mathscr{P}_{j})$-structured. In particular, a $(4, 4)$-structured space is called a {\it charming space}.

\section{The topological properties of $(i, j)$-structured spaces}
In this section, we shall discuss some topological properties of $(i, j)$-structured spaces. We first consider the closed hereditary property of $(i, j)$-structured spaces.

\begin{proposition}\label{p0}
Each closed subspace of an $(i, j)$-structured space $X$ is $(i, j)$-structured, where $i, j\in\{0, 1, 2, 3, 4, 5\}$.
\end{proposition}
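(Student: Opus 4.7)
The plan is to start with an $(i,j)$-structured space $X$ with shell $Y \in \mathscr{P}_{i}$ (so every open neighborhood $U$ of $Y$ satisfies $X\setminus U\in\mathscr{P}_{j}$), fix a closed subspace $F\subseteq X$, and propose $Y' := Y\cap F$ as a shell witnessing that $F$ is $(i,j)$-structured. The verification then splits into checking the two defining conditions.

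First I would check that $Y'\in\mathscr{P}_{i}$. Since $F$ is closed in $X$, the set $Y' = Y\cap F$ is closed in $Y$, and each of the six classes $\mathscr{P}_{0},\ldots,\mathscr{P}_{5}$ (\,$\sigma$-compact, separable metrizable, countable network, Lindel\"{o}f $p$, Lindel\"{o}f $\Sigma$, compact\,) is preserved by closed subspaces. This is a standard fact from each respective theory and I would simply cite it.

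Next, and this is the core of the argument, I would show that for any open neighborhood $V$ of $Y'$ in $F$, the set $F\setminus V$ lies in $\mathscr{P}_{j}$. Write $V = F\cap W$ for some $W$ open in $X$, and set $U := W\cup (X\setminus F)$. Then $U$ is open in $X$; and since $Y\cap F\subseteq W$ while $Y\setminus F\subseteq X\setminus F$, we have $Y\subseteq U$. By hypothesis on $X$, $X\setminus U\in\mathscr{P}_{j}$. The key identity is
\[
X\setminus U \;=\; X\setminus\bigl(W\cup(X\setminus F)\bigr) \;=\; F\cap (X\setminus W) \;=\; F\setminus V,
\]
so $F\setminus V\in\mathscr{P}_{j}$, as required.

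The argument is entirely formal once one has the right candidate shell, so there is no real obstacle; the only point that needs care is the trick of enlarging $W$ by $X\setminus F$ to convert an $F$-open neighborhood of $Y'$ into an $X$-open neighborhood of $Y$. Everything else reduces to closed-hereditariness of the six classes $\mathscr{P}_{i}$, which is classical.
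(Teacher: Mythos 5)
Your proposal is correct and follows essentially the same route as the paper: both take $Y\cap F$ as the candidate shell and convert a relatively open neighborhood $V=F\cap W$ into the $X$-open neighborhood $W\cup(X\setminus F)$ of $Y$, whose complement is exactly $F\setminus V$. Your write-up is in fact slightly cleaner, since you state the identity $X\setminus\bigl(W\cup(X\setminus F)\bigr)=F\setminus V$ as an equality where the paper only records an inclusion.
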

\begin{proof}
Let $A$ be a closed subspace of $(i,j)$-structured space $X$. Since $X$ is an $(i,j)$-structured space, there is a subspace $Y$ such that $Y$ is a $(\mathscr{P}_{i}, \mathscr{P}_{j})$-shell of $X$. Let $B=A \cap Y$. We claim that the subspace $B$ is a $(\mathscr{P}_{i}, \mathscr{P}_{j})$-shell of the subspace $A$. Obviously, we have $B\in\mathscr{P}_{i}$. Now it suffices to show that for each open neighborhood $V$ of $B$ in $A$, the subspace $A\setminus V$ of $A$ belongs to $\mathscr{P}_{j}$. Indeed, let $V$ be an open neighborhood of $B$ in $A$. Then there is an open neighborhood $V_1$ in $X$ such that $V=V_1 \cap\ A$ and $A\setminus V=A\setminus V_1$. Hence, we have $Y\setminus V_1\subset X\setminus A$ and $V_1\cap Y\subset V_1$, which implies $(Y\setminus V_1)\cup(V_1\cap Y )=Y\subset V_1\cup(X\setminus A)$. Now it is easy to see that $A\setminus V=A\setminus V_1\subset X\setminus(V_1\cup(X\setminus A))\in\mathscr {P}_{j}$.
\end{proof}

The following three propositions are easy exercises.

\begin{proposition}\label{pcontinuous}
Any image of an $(i, j)$-structured space under a continuous
map is an $(i, j)$-structured space, where $i, j\in\{0, 2, 4, 5\}$.
\end{proposition}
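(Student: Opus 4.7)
The plan is to transport the shell directly across the map. Let $f\colon X\to Z$ be a continuous surjection with $X$ an $(i,j)$-structured space, and let $Y\subseteq X$ be a $(\mathscr{P}_i,\mathscr{P}_j)$-shell. I would take $f(Y)$ as the candidate shell in $Z$ and verify the two required properties.

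First I would record the key fact that makes this proof go through, namely that each of the classes $\mathscr{P}_0$ ($\sigma$-compact), $\mathscr{P}_2$ (countable network), $\mathscr{P}_4$ (Lindel\"of $\Sigma$), and $\mathscr{P}_5$ (compact) is preserved under continuous images; this is exactly the reason the proposition excludes $i,j\in\{1,3\}$, since separable metrizability and the Lindel\"of $p$-property are not image-preserved. From this observation $f(Y)\in\mathscr{P}_i$ is immediate.

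Next, for any open neighborhood $U$ of $f(Y)$ in $Z$, I would set $V=f^{-1}(U)$, which is an open neighborhood of $Y$ in $X$ by continuity, so $X\setminus V\in\mathscr{P}_j$ by the shell property. The small computation to do is the set-theoretic identity
\[
Z\setminus U \;=\; f(X)\setminus U \;=\; f(X\setminus f^{-1}(U)) \;=\; f(X\setminus V),
\]
which uses surjectivity for the second equality (the inclusion $\subseteq$ needs a preimage of each point of $Z\setminus U$; the inclusion $\supseteq$ is automatic). Then $Z\setminus U$ is the continuous image of the subspace $X\setminus V\in\mathscr{P}_j$ under the restriction of $f$, so $Z\setminus U\in\mathscr{P}_j$ again by image-closure of $\mathscr{P}_j$.

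There is no real obstacle here; the only point worth stating carefully is the image-preservation of the four classes involved, which is why the hypothesis restricts $i,j$ to $\{0,2,4,5\}$. With that in hand the verification is a one-line preimage argument together with the surjectivity identity above.
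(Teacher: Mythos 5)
Your proof is correct and is precisely the argument the paper leaves as an ``easy exercise'': push the shell forward to $f(Y)$, pull back neighborhoods, and use the identity $Z\setminus U=f(X\setminus f^{-1}(U))$ together with the fact that each of $\mathscr{P}_0,\mathscr{P}_2,\mathscr{P}_4,\mathscr{P}_5$ is preserved by continuous images. Nothing is missing, and your remark explaining why $i,j\in\{1,3\}$ are excluded correctly identifies the only point where the argument could fail.
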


\begin{proposition}
Any preimage of an $(i, j)$-structured space under a perfect
map is an $(i, j)$-structured space, where $i, j\in\{0, 3, 4, 5\}$.
\end{proposition}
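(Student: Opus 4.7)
The plan is to let $f\colon X \to Z$ be a perfect map onto an $(i,j)$-structured space $Z$, fix a $(\mathscr{P}_{i}, \mathscr{P}_{j})$-shell $Y$ of $Z$, and show that $Y' = f^{-1}(Y)$ is a $(\mathscr{P}_{i}, \mathscr{P}_{j})$-shell of $X$. This reduces the statement to two verifications: that $Y' \in \mathscr{P}_{i}$, and that $X \setminus V \in \mathscr{P}_{j}$ for every open neighborhood $V$ of $Y'$ in $X$.

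For the first, I would note that the restriction $f|_{Y'}\colon Y' \to Y$ is again perfect, since perfect maps pull back to perfect maps over any subspace. Then I would invoke the standard fact that each of the classes $\mathscr{P}_{0}, \mathscr{P}_{3}, \mathscr{P}_{4}, \mathscr{P}_{5}$ is preserved under perfect preimages: this is trivial for compact and $\sigma$-compact spaces, and is classical for Lindel\"{o}f $p$-spaces and for Lindel\"{o}f $\Sigma$-spaces.

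For the second, given an open $V \supseteq Y'$ in $X$, the key move is to push the problem down to $Z$ using that $f$ is closed. Set $F = f(X \setminus V)$; this is closed in $Z$, and $F \cap Y = \emptyset$, because any $y \in F \cap Y$ would produce some $x \in X \setminus V$ with $f(x) = y$, contradicting $f^{-1}(y) \subseteq f^{-1}(Y) = Y' \subseteq V$. Hence $U = Z \setminus F$ is an open neighborhood of $Y$ in $Z$, so $Z \setminus U = F \in \mathscr{P}_{j}$. By the preservation property already invoked, $f^{-1}(F) \in \mathscr{P}_{j}$, and since $X \setminus V$ is a closed subset of $f^{-1}(F)$ and each class in question is closed-hereditary (by Proposition~\ref{p0} or directly), it follows that $X \setminus V \in \mathscr{P}_{j}$.

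No step is delicate; the whole argument is bookkeeping, and the only nontrivial inputs are the two preservation facts (invariance under perfect preimages, and closed-heredity), which happen to hold simultaneously for exactly the indices in $\{0, 3, 4, 5\}$. They fail for $\mathscr{P}_{1}$ and $\mathscr{P}_{2}$, as witnessed by a perfect map from any non-metrizable compact Hausdorff space to a one-point space, which presumably is why the authors restrict the range of indices and label the proposition an easy exercise.
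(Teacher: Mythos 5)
Your argument is correct and is precisely the intended one: the paper states this proposition without proof (it is one of the three results labelled ``easy exercises''), and the standard shell-pullback argument you give --- taking $f^{-1}(Y)$ as the shell and using closedness of $f$ to produce the open neighborhood $Z\setminus f(X\setminus V)$ of $Y$ --- is the expected solution. The two preservation facts you invoke (invariance under perfect preimages and closed-heredity) do hold simultaneously for $\mathscr{P}_{0}, \mathscr{P}_{3}, \mathscr{P}_{4}, \mathscr{P}_{5}$, so nothing is missing.
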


\begin{proposition}\label{pa}
For $i\in \{0, 2, 4\}$, if $X_{j}\in \mathscr{P}_{i}$ for each $j\in\mathbb{N}$ and $X=\bigcup_{j\in\mathbb{N}}X_{j}$, then $X\in \mathscr{P}_{i}$.
\end{proposition}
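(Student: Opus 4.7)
The plan is to handle the three cases $i=0,2,4$ separately, moving from the most concrete to the most abstract, since each class has a different defining structure.

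For $i=0$, I would use the direct decomposition: write each $X_j = \bigcup_{k\in\mathbb{N}} K_{j,k}$ with each $K_{j,k}$ compact. Then $X = \bigcup_{j,k\in\mathbb{N}} K_{j,k}$ is a countable union of compact subspaces, so $X\in\mathscr{P}_0$. Nothing beyond reindexing is required.

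For $i=2$, the plan is to combine the networks. For each $j$ pick a countable network $\mathscr{N}_j$ of $X_j$ (where elements of $\mathscr{N}_j$ are viewed as subsets of $X$). I claim $\mathscr{N} := \bigcup_{j\in\mathbb{N}}\mathscr{N}_j$ is a countable network of $X$. Given an open set $U\subset X$ and a point $x\in U$, $x$ lies in some $X_{j}$; then $U\cap X_j$ is open in $X_j$, so there exists $N\in\mathscr{N}_j$ with $x\in N\subset U\cap X_j\subset U$. Since $\mathscr{N}$ is countable, $X\in\mathscr{P}_2$.

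For $i=4$, the natural approach is via the external characterization of Lindelöf $\Sigma$-spaces recalled in Section~2: each $X_j$ is the continuous image of a Lindelöf $p$-space $Z_j$, with perfect map $g_j\colon Z_j\to M_j$ onto a separable metrizable space $M_j$. Form the topological sums $Z=\bigoplus_{j\in\mathbb{N}} Z_j$ and $M=\bigoplus_{j\in\mathbb{N}} M_j$. Then $M$ is separable metrizable, and the induced map $Z\to M$ is perfect (perfectness is preserved under countable disjoint sums, since compactness of fibres is fibrewise and closedness can be checked on each summand), so $Z$ is a Lindelöf $p$-space. The maps $Z_j\to X_j\hookrightarrow X$ assemble into a continuous surjection $Z\to X$, exhibiting $X$ as the continuous image of a Lindelöf $p$-space; hence $X\in\mathscr{P}_4$.

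The only mildly delicate point, and therefore the step I would write out carefully, is the $i=4$ case: one must make sure that the topological sum preserves both the Lindelöf $p$ property (via the perfect map to a separable metrizable sum) and that the map onto $X$ is continuous, which follows since its restriction to each clopen summand $Z_j$ is continuous. Cases $i=0$ and $i=2$ are immediate from the definitions.
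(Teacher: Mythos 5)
The paper offers no argument here at all—it lists this among "the following three propositions are easy exercises"—so there is no proof to compare against; your write-up simply supplies the omitted verification, and it does so correctly. All three cases are sound: the reindexing for $\sigma$-compactness, the union of networks (which matches the paper's definition of network, since for $x\in U$ open one finds $N\in\mathscr{N}_j$ with $x\in N\subset U\cap X_j\subset U$), and the topological-sum construction for the Lindel\"{o}f $\Sigma$ case, where the sum $\bigoplus_j M_j$ is indeed separable metrizable and the sum of the perfect maps is perfect because closedness can be checked summand by summand.
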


However, the following questions are still unknow for us.

\begin{question}
For arbitrary $i, j\in\{1, 3\}$, is any image of an $(i, j)$-structured space under a continuous
map an $(i, j)$-structured space?
\end{question}

\begin{question}
For arbitrary $i, j\in\{1, 2\}$, is any preimage of an $(i, j)$-structured space under a perfect
map an $(i, j)$-structured space?
\end{question}

\begin{proposition}\label{p1}
Let $X=\bigcup_{k\in\omega} X_k$, where each $X_k$ is an $(i, j)$-structured space. Then $X$ is also an $(i, j)$-structured space, where $i, j\in \{0, 2, 4\}$.
\end{proposition}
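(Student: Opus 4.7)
The plan is to build the shell for $X$ by taking the union of the shells of the pieces $X_k$, and then apply Proposition \ref{pa} twice (once for the shell and once for the complements of neighborhoods).

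In more detail, for each $k\in\omega$ I would fix a $(\mathscr{P}_i,\mathscr{P}_j)$-shell $Y_k$ of $X_k$, so that $Y_k\in\mathscr{P}_i$ and every open neighborhood of $Y_k$ in $X_k$ has complement (in $X_k$) belonging to $\mathscr{P}_j$. I then set $Y=\bigcup_{k\in\omega}Y_k$ and claim $Y$ is a $(\mathscr{P}_i,\mathscr{P}_j)$-shell of $X$. Since $i\in\{0,2,4\}$, Proposition \ref{pa} immediately gives $Y\in\mathscr{P}_i$.

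The remaining point is to verify that, for any open neighborhood $U$ of $Y$ in $X$, we have $X\setminus U\in\mathscr{P}_j$. The key observation is that $U\cap X_k$ is open in $X_k$ and contains $Y_k$, so by choice of $Y_k$ the set $X_k\setminus(U\cap X_k)=X_k\setminus U$ belongs to $\mathscr{P}_j$. Then $X\setminus U=\bigcup_{k\in\omega}(X_k\setminus U)$ is a countable union of members of $\mathscr{P}_j$, and since $j\in\{0,2,4\}$ a second application of Proposition \ref{pa} yields $X\setminus U\in\mathscr{P}_j$, as required.

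No genuine obstacle appears in this argument; the reason for restricting to $i,j\in\{0,2,4\}$ is precisely that Proposition \ref{pa} (countable additivity of the class) fails for separable metrizable spaces, Lindel\"of $p$-spaces, and compact spaces, which is exactly why the statement excludes $i$ or $j$ in $\{1,3,5\}$. The only mild point to be careful about is that $Y_k$, viewed as a subspace of $X_k$ or of $X$, has the same topology, so being in $\mathscr{P}_i$ transfers unambiguously; this is automatic since $X_k$ carries the subspace topology from $X$.
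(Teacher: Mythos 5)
Your proposal is correct and follows essentially the same route as the paper: take the union $Y=\bigcup_{k}Y_k$ of the shells, use Proposition~\ref{pa} to get $Y\in\mathscr{P}_i$, and use it again on $X\setminus U=\bigcup_k(X_k\setminus(X_k\cap U))$ to get the complement condition. No differences worth noting.
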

\begin{proof}
Fix $i, j\in \{0, 2, 4\}$. For every $k\in\omega$, since $X_k$ is an $(i, j)$-structured space, there exists a subspace $Y_k\subset X_k$ such that the subspace $Y_{k}$ is a $(\mathscr{P}_{i}, \mathscr{P}_{j})$-shell of the space $X_{k}$. Put $Y=\bigcup_{k\in\omega} Y_k$. We claim that $Y$ is a $(\mathscr{P}_{i}, \mathscr{P}_{j})$-shell of the space $X$. Indeed, the space $Y$ belongs to $\mathscr{P}_{i}$ by Proposition~\ref{pa}. Moreover, for each open neighborhood $U$ of $Y$ in $X$, each $X_k\setminus (X_k\cap U)\in \mathscr{P}_{j}$, hence it follows from Proposition~\ref{pa} that $$X\setminus U=(\bigcup_{k\in\omega} X_k)\setminus U=\bigcup_{k\in\omega}(X_k\setminus U)=\bigcup _{k\in\omega}(X_k\setminus X_k\cap U)\in \mathscr{P}_{j}.$$ Therefore, $X$ is an $(i, j)$-structured space.
\end{proof}

Since the intersection of countably many Lindel\"{o}f $\Sigma$-subspaces is also a Lindel\"{o}f $\Sigma$-space, it is natural to pose the following question.

\begin{question}\label{q001}
Let $X$ be a space and $X_k\subset X$ for each $k\in\mathbb{N}$. If each $X_k$ is an $(i, j)$-structured space, is $\bigcap_{k\in\mathbb{N}}X_{k}$ an $(i, j)$-structured space, where $i, j\in \{0, 1, 2, 3, 4, 5\}$?
\end{question}

It is well-known that the product of countably many Lindel\"{o}f $\Sigma$-spaces is also a Lindel\"{o}f $\Sigma$-space. However, the product of two charming spaces may not be a charming space, see Example~\ref{e1}.

We know that each discrete space $X$ has a Hausdorff one point Lindel\"{o}fication which defined as follows: take an arbitrary point $p\not\in X$ and consider the set $Y=X\cup \{p\}$, and then let each point of $X$ be open and each neighborhood of $p$ be the form $U\cup\{p\}$, where $U$ is open in $X$ and $X\setminus U$ is a countable set.

\begin{example}\label{e1}
There exists a charming space $X$ satisfying the following conditions:
\begin{enumerate}
\item $X$ is not a Lindel\"{o}f $\Sigma$-space.

\item The product of $X^{2}$ is not a charming space.
\end{enumerate}
\end{example}

\begin{proof}
Let $X=\{\infty\}\cup D$ be the one-point Lindel\"{o}fication of an uncountable discrete space, where $D$ is an uncountable discrete space. V.V.Tkachuk \cite{T2010} has proved that $X$ is not a Lindel\"{o}f $\Sigma$-space.
Obviously, the subspace $\{\infty\}$
 is a Lindel\"{o}f $\Sigma$-space, and for each open neighborhood $V$ of $\{\infty\}$ in $X$ we have $D\setminus V$ is Lindel\"{o}f, then $D\setminus V$ is separable and metrizable. Thus $D\setminus V$ is a Lindel\"{o}f $\Sigma$-space. Therefore, $X$ is a Charming space.

Next we shall show that $X^2$ is not a charming space. Assume that $X^2$ is a charming space. Then there exists a Lindel\"{o}f $\Sigma$-subspace $L\subset X^{2}$ such that, for each open neighborhood $U$ of $L$, $X^{2}\setminus U$ is a Lindel\"{o}f $\Sigma$-subspace. We claim that $(\infty,\infty)\in L$. 
Assume on the contrary that $(\infty,\infty)\not\in L$. Let $D_{1}=\pi_{1}(L)\cap D$ and $D_{2}=\pi_{2}(L)\cap D$, where $\pi_{1}$ and $\pi_{2}$ are the projections to the first coordinate and the second coordinate respectively. Then it follows from the above facts that $D_{1}$ and $D_{2}$ must be countable. Therefore, $L$ is a countable set. Therefore, it is easy to see that there exists a neighborhood $V$ of $L$ such that $X^2\setminus V$ is uncountable. However, since $X^2\setminus V$ is a Lindel\"{o}f $\Sigma$-subspace, we can similarly obtain that $X^2\setminus V$ is countable, which is a contradiction.

Since each compact subset of $X^{2}$ is finite, it follows from \cite[Theorem 5]{T2010} that $L$ must be a countable set. Then there exists a point $x_{0}\in D$ such that $(\{x_{0}\}\times X)\cap L=\emptyset$ and $(X\times\{x_{0}\})\cap L=\emptyset$. Then it is easy to see that $((X\setminus\{x_0\})\times (X\setminus\{x_0\}))$ is an open neighborhood of $L$ in $X^{2}$. Put $V=X\setminus\{x_0\}$. Then $$X^2\setminus (V\times V)=\{(x_0,x_0)\}\cup(\{x_0\}\times V)\cup(V\times\{x_0\})$$ is a Lindel\"{o}f $\Sigma$-space. Since $\{x_0\}\times V$ is closed in $X^2\setminus (V\times V)$, we see that $\{x_0\}\times V$ is a Lindel\"{o}f $\Sigma$-space. However, $V$ is homeomorphic to $X$, which is a contradiction. Therefore, $X^2$ is not a charming space.
\end{proof}

 {\bf Remark} (1) In \cite{T2010}, V.V.Tkachuk proved that if every compact subset of a Lindel\"{o}f $\Sigma$-space $X$ is finite then $X$ is countable. Obviously, we can not replace `` Lindel\"{o}f $\Sigma$-spaces'' with ``Charming spaces''in Example~\ref{e1}.

 (2) The product of a family of finitely many Charming spaces need not be a Charming space.

(3) The space $X$ in Example~\ref{e1} is $(i, j)$-structured for $i, j\in \{0, 1, 2, 3, 4\}$. Therefore, the product of a family of finitely many $(i, j)$-structured spaces need not be an $(i, j)$-structured space.

It is easy to check that $X^{2}$ in Example~\ref{e1} is a Lindel\"{o}f space. Therefore, we have the following question:

\begin{question}\label{q0001}
Is the product of two charming spaces Lindel\"{o}f? In particular, is the product of a charming space with a Lindel\"{o}f $\Sigma$-space a Lindel\"{o}f space?
\end{question}

Finally, we discuss a charming space with a $G_{\delta}$-diagonal. The following theorem is well known in the class of Lindel\"{o}f $\Sigma$-spaces.

\begin{theorem}\cite{T2010}\label{t125}
A Lindel\"{o}f $\Sigma$-space with a $G_{\delta}$-diagonal has a countable network.
\end{theorem}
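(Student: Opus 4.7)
The plan is to combine the standard characterization of Lindel\"{o}f $\Sigma$-spaces via a countable network modulo a compact cover with a $G_{\delta}$-diagonal sequence, and assemble from these a countable network for $X$ by hand. I will invoke the fact that a space $X$ is Lindel\"{o}f $\Sigma$ if and only if there exist a countable family $\mathscr{N}$ of (closed) subsets of $X$ and a cover $\mathscr{K}$ of $X$ by compact sets such that for every $K\in\mathscr{K}$ and every open $U\supseteq K$ some $N\in\mathscr{N}$ satisfies $K\subseteq N\subseteq U$.

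For the setup, since $X$ has a $G_{\delta}$-diagonal I fix a sequence $(\mathscr{U}_{n})_{n\in\omega}$ of open covers of $X$ with $\bigcap_{n}\mathrm{St}(x,\mathscr{U}_{n})=\{x\}$ for every $x\in X$. Because $X$ is Lindel\"{o}f (being Lindel\"{o}f $\Sigma$), each $\mathscr{U}_{n}$ may be refined to a countable open cover, so $\mathscr{B}=\bigcup_{n\in\omega}\mathscr{U}_{n}$ is a countable family of open subsets of $X$. I will then propose
\[
\mathscr{M}=\{\,N\setminus(V_{1}\cup\cdots\cup V_{k}):\ N\in\mathscr{N},\ V_{1},\dots,V_{k}\in\mathscr{B},\ k\in\omega\,\}
\]
as a countable network for $X$.

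To verify this, I fix $x\in X$ and an open neighborhood $U$ of $x$, then choose $K\in\mathscr{K}$ with $x\in K$. For each $y\in K\setminus U$, the condition $\bigcap_{n}\mathrm{St}(y,\mathscr{U}_{n})=\{y\}$ together with $x\neq y$ yields some $V_{y}\in\mathscr{B}$ with $y\in V_{y}$ and $x\notin V_{y}$. Since $K\setminus U$ is closed in the compact set $K$ it is compact, so finitely many such elements, say $V_{y_{1}},\dots,V_{y_{k}}$, cover $K\setminus U$, giving $K\subseteq U\cup V_{y_{1}}\cup\cdots\cup V_{y_{k}}$. The network-modulo-compact-cover property will then supply $N\in\mathscr{N}$ with $K\subseteq N\subseteq U\cup V_{y_{1}}\cup\cdots\cup V_{y_{k}}$, so $x\in N\setminus(V_{y_{1}}\cup\cdots\cup V_{y_{k}})\subseteq U$, exhibiting an element of $\mathscr{M}$ through $x$ inside $U$.

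The main obstacle is not the combinatorial construction but invoking the right characterization of Lindel\"{o}f $\Sigma$-spaces; one must remember that $\mathscr{N}$ can be taken countable even though the compact cover $\mathscr{K}$ is typically not, and that it is precisely the compactness of each $K\in\mathscr{K}$ which powers the finite-subcover step. An alternative route would be first to observe that every compact subspace of $X$ inherits a $G_{\delta}$-diagonal and is therefore metrizable (a classical theorem on compacta with $G_{\delta}$-diagonal), but as the plan above shows, this stronger fact is not actually needed.
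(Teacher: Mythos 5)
Your proof is correct. The paper does not prove this statement at all---it is quoted from Tkachuk's survey \cite{T2010}---and your argument is essentially the standard one from the literature: combine the characterization of Lindel\"{o}f $\Sigma$-spaces by a countable network modulo a compact cover with a $G_{\delta}$-diagonal sequence $(\mathscr{U}_{n})$ (Ceder's equivalence), use Lindel\"{o}fness to make $\bigcup_{n}\mathscr{U}_{n}$ countable, and take differences $N\setminus(V_{1}\cup\cdots\cup V_{k})$ as the network. All the steps you outline (separating $x$ from each $y\in K\setminus U$ via some $V_{y}$ with $x\notin V_{y}$, extracting a finite subcover of the compact set $K\setminus U$, and then sandwiching $K\subseteq N\subseteq U\cup V_{y_{1}}\cup\cdots\cup V_{y_{k}}$) go through without difficulty.
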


\begin{proposition}\label{p00}
Let $S$ be the Sorgenfrey line. Then any uncountable subspace of $S$ is not a Lindel\"{o}f $\Sigma$-subspace.
\end{proposition}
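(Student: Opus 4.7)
The plan is to argue by contradiction. Suppose $Y\subset S$ is uncountable and a Lindel\"of $\Sigma$-subspace. I would first observe that the Sorgenfrey line $S$ is submetrizable, since the identity map from $S$ onto the Euclidean real line is continuous; consequently $S$ has a $G_{\delta}$-diagonal, and as a $G_{\delta}$-diagonal is inherited by every subspace, $Y$ has a $G_{\delta}$-diagonal as well. Theorem~\ref{t125} then supplies a countable network $\mathscr{N}=\{N_{n}:n\in\omega\}$ for $Y$.

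Next I would exploit the one-sided nature of the Sorgenfrey basic neighborhoods. For each $y\in Y$ the set $U_{y}:=[y,y+1)\cap Y$ is an open neighborhood of $y$ in $Y$, so by the definition of a network there exists $N(y)\in\mathscr{N}$ with $y\in N(y)\subset U_{y}$. Since $Y$ is uncountable while $\mathscr{N}$ is countable, the pigeonhole principle yields a single $N\in\mathscr{N}$ that equals $N(y)$ for all $y$ in some uncountable set $A\subset Y$.

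The contradiction is then immediate: for any $y,y'\in A$ one has $y'\in N\subset [y,y+1)$, which forces $y\le y'$, while symmetrically $y\in N\subset [y',y'+1)$ forces $y'\le y$; hence $y=y'$, contradicting $|A|>\aleph_{0}$. I do not expect any real obstacle: the only external input is Theorem~\ref{t125}, and the key insight is just that the asymmetry of the Sorgenfrey base $\{[a,b)\}$ prevents two distinct points of $Y$ from sharing a common network element sitting inside a half-open neighborhood of each.
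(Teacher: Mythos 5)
Your proof is correct, but it follows a genuinely different route from the paper's. Both arguments hinge on Theorem~\ref{t125}, yet they apply it to different objects. The paper symmetrizes: it sets $L=Y\cup(-Y)$, asserts that $L$ is again a Lindel\"{o}f $\Sigma$-subspace of $S$, concludes from Theorem~\ref{t125} that $L^{2}$ has a countable network, and then obtains a contradiction from the classical fact that the antidiagonal $\{(x,-x):x\in L\}$ is an uncountable closed discrete subspace of $L^{2}$ (a space with a countable network has no such subspace). You apply Theorem~\ref{t125} directly to $Y$ itself --- justified, as you note, because the Sorgenfrey line condenses onto the Euclidean line and a $G_{\delta}$-diagonal passes to subspaces --- and then derive the contradiction by a pigeonhole argument: a single network element $N$ contained in $[y,y+1)$ for each of two points $y,y'$ forces $y=y'$, so no countable network can serve an uncountable $Y$. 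Your version has two advantages: it never leaves $Y$ (no product, no passage to $-Y$, whose Lindel\"{o}f $\Sigma$ property as a subspace of $S$ is not entirely obvious since reflection reverses the half-open intervals), and the final combinatorial step is completely elementary. The paper's version is the standard antidiagonal argument and trades that elementary step for a well-known structural fact about spaces with countable networks. Both establish the proposition.
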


\begin{proof}
Let $Y$ be an uncountable subspace of $S$. Assume that $Y$ is a Lindel\"{o}f $\Sigma$-subspace. Then $L=Y\cup (-Y)$ is also a Lindel\"{o}f $\Sigma$-subspace of $S$, hence $L^{2}$ has a countable network by Theorem~\ref{t125} since $S$ has a $G_{\delta}$-diagonal. However, $L^{2}$ contains a closed, uncountable and discrete subspace $\{(x, -x): x\in L\}$, which is a contradiction since $L^{2}$ has a countable network.
\end{proof}

\begin{example}\label{echarming spaces}
There exists a hereditarily Lindel\"{o}f space $X$ which is not a charming space.
\end{example}

\begin{proof}
Let $X$ be the Sorgenfrey line. Then $X$ is a hereditarily Lindel\"{o}f space. Assume that $X$ is a charming space. Then there exists a subspace $Y$ such that $Y$ is a $(\mathscr{P}_{4}, \mathscr{P}_{4})$-shell in $X$. By Proposition~\ref{p00}, $Y$ is countable. Let $Y=\{b_{n}: n\in\mathbb{N}\}$. For each $n\in\mathbb{N}$, take an open neighborhood $[b_{n}, b_{n}+2^{-n})$ in $X$. Then $U=\bigcup _{n\in\mathbb{N}}[b_{n}, b_{n}+2^{-n})$ is an open neighborhood of $Y$ in $X$. Since $m(Y)\leq \sum_{n=1}^{\infty}2^{-n}=1$, we have $X\setminus U$ is an uncountable set, then $X\setminus U$ is not a Lindel\"{o}f $\Sigma$-space by Proposition~\ref{p00}, which is a contradiction.
\end{proof}

It is natural to ask the following question.

\begin{question}\label{q0002}
Does each charming space $X$ with a $G_{\delta}$-diagonal have a countable network?
\end{question}

By Theorem~\ref{t125}, the following proposition is obvious.

\begin{proposition}
Each charming space $X$ with a $G_{\delta}$-diagonal is a $(2, 2)$-structured space.
\end{proposition}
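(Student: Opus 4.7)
The plan is to use the charming structure of $X$ directly, upgrading the Lindel\"of $\Sigma$ witness and its complements to spaces with a countable network by invoking Theorem~\ref{t125}. The key observation is that the property of having a $G_\delta$-diagonal passes to arbitrary subspaces, so every Lindel\"of $\Sigma$-subspace of $X$ automatically satisfies the hypothesis of Theorem~\ref{t125}.

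More concretely, since $X$ is charming (i.e.\ $(4,4)$-structured), fix a $(\mathscr{P}_{4},\mathscr{P}_{4})$-shell $Y$ of $X$. First I would note that $Y$, being a subspace of $X$, inherits a $G_\delta$-diagonal; combined with $Y\in\mathscr{P}_4$, Theorem~\ref{t125} yields $Y\in\mathscr{P}_2$. Next, for an arbitrary open neighborhood $U$ of $Y$ in $X$, the complement $X\setminus U$ is a Lindel\"of $\Sigma$-subspace by the choice of $Y$, and again inherits a $G_\delta$-diagonal from $X$; a second application of Theorem~\ref{t125} gives $X\setminus U\in\mathscr{P}_2$. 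Hence $Y$ serves simultaneously as a $(\mathscr{P}_{2},\mathscr{P}_{2})$-shell of $X$, proving that $X$ is $(2,2)$-structured.

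There is essentially no obstacle here: the entire argument is a two-line reduction to Theorem~\ref{t125} via the hereditary nature of the $G_\delta$-diagonal property. The only thing worth stating explicitly in the write-up is the routine fact that a $G_\delta$-diagonal in $X$ restricts to a $G_\delta$-diagonal on every subspace, so that both $Y$ and each $X\setminus U$ qualify as inputs to Theorem~\ref{t125}.
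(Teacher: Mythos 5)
Your argument is correct and is exactly the one the paper intends: the paper simply remarks that the proposition is ``obvious'' from Theorem~\ref{t125}, and your two applications of that theorem (to the shell $Y$ and to each complement $X\setminus U$, both of which inherit the $G_{\delta}$-diagonal) supply precisely the omitted details. Nothing further is needed.
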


The following two results are well-known in the class of Lindel\"{o}f $\Sigma$-spaces.

\begin{theorem}\cite{H1975}
Each hereditarily Lindel\"{o}f $\Sigma$-space has a countable network.
\end{theorem}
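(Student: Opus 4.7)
The plan is to combine the internal characterisation of Lindel\"of $\Sigma$-spaces with the cardinal restrictions forced by hereditary Lindel\"ofness. Recall the standard fact that $X$ is a Lindel\"of $\Sigma$-space if and only if there exist a $\sigma$-discrete family $\mathscr{F}=\bigcup_{n<\omega}\mathscr{F}_n$ of closed subsets of $X$ and a cover $\mathscr{K}$ of $X$ by compact sets such that $\mathscr{F}$ is a network modulo $\mathscr{K}$, i.e.\ for every $K\in\mathscr{K}$ and every open $U\supset K$ there is $F\in\mathscr{F}$ with $K\subset F\subset U$.

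The first step is to shrink $\mathscr{F}$ to countable size. Since $X$ is hereditarily Lindel\"of, its spread $s(X)$ is countable, and a discrete family of nonempty subsets of a space of countable spread is itself countable (pick a point from each member). Hence each $\mathscr{F}_n$ is countable, and so is $\mathscr{F}=\bigcup_{n<\omega}\mathscr{F}_n$.

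The second and crucial step is to promote $\mathscr{F}$ from a network modulo $\mathscr{K}$ to an honest network. Let $\mathscr{G}$ be the countable family of all finite intersections of members of $\mathscr{F}$. One first notes that the countability of $\mathscr{F}$ together with the network-modulo-cover property in a Hausdorff ambient space forces $K=\bigcap\{F\in\mathscr{F}:K\subset F\}$ for every $K\in\mathscr{K}$, so every member of $\mathscr{K}$ is a compact $G_\delta$ of $X$ with a countable outer base drawn from $\mathscr{F}$. To verify that $\mathscr{G}$ is a network at a point $x\in X$ inside an open neighbourhood $U$, one picks $K_x\in\mathscr{K}$ with $x\in K_x$ and exploits the compactness of $K_x\setminus U$ to produce finitely many members $F_1,\dots,F_k\in\mathscr{F}$, all containing $x$, whose intersection misses $K_x\setminus U$; a further careful argument, using that $X\setminus U$ is itself Lindel\"of (by hereditary Lindel\"ofness) together with the network-modulo-cover property applied to the other compact members of $\mathscr{K}$ meeting $X\setminus U$, is then needed to force the entire intersection $F_1\cap\cdots\cap F_k$ to lie inside $U$.

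The principal technical obstacle is exactly this last passage, namely converting a network modulo a compact cover into a genuine countable network by finitely many refinements. Hereditary Lindel\"ofness is used twice along the way: first to shrink the $\sigma$-discrete network to countable size, and second to transfer compactness-type finite-extraction arguments from a single $K_x$ to the whole complement $X\setminus U$, so that the finite intersection lying in $\mathscr{G}$ really fits inside $U$.
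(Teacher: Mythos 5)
There is a genuine gap here, and in fact your argument sets out to prove a statement that is false under the reading you give it. The only consequences of the hypothesis you ever invoke are that $X$ is a Lindel\"of $\Sigma$-space and that $X$ is hereditarily Lindel\"of (countable spread in step one, Lindel\"ofness of $X\setminus U$ in step two). But the theorem, as in Hodel's Corollary 4.13 and as the paper uses it in Theorem~\ref{ta}, concerns \emph{hereditarily Lindel\"of-$\Sigma$} spaces, i.e.\ spaces every subspace of which is a Lindel\"of $\Sigma$-space. Under the weaker reading the conclusion fails: the double arrow space $A$ is compact, hence Lindel\"of $\Sigma$, and is hereditarily Lindel\"of, yet it has no countable network, since for compact Hausdorff spaces network weight equals weight and $A$ is not second countable. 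So no argument that uses only ``Lindel\"of $\Sigma$ plus hereditarily Lindel\"of'' can close.

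The breakdown occurs exactly at the step you leave as ``a further careful argument.'' To extract finitely many $F_1,\dots,F_k\in\mathscr{F}$ containing $x$ whose intersection misses $K_x\setminus U$, you need that for every $y\in K_x\setminus U$ some member of $\mathscr{F}$ contains $x$ but omits $y$; the identity $K=\bigcap\{F\in\mathscr{F}:K\subset F\}$ only provides outer approximations of the whole compact piece $K_x$ and separates nothing \emph{inside} $K_x$. Concretely, for the double arrow space one may take $\mathscr{K}=\{A\}$ and $\mathscr{F}=\{A\}$, a perfectly legitimate countable network modulo the compact cover, and then your family $\mathscr{G}$ is $\{A\}$, which is not a network. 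What the proof actually requires, and what your proposal never establishes, is that each compact set $K\in\mathscr{K}$ itself has a countable network (equivalently, is metrizable); this is precisely where the full strength of ``every subspace is Lindel\"of $\Sigma$'' must be used, for instance by showing that a non-metrizable compact piece would contain an uncountable subspace that cannot be Lindel\"of $\Sigma$. Once the members of $\mathscr{K}$ are known to be metrizable, a finite-intersection refinement along the lines you describe can be carried out, but that lemma is the heart of the theorem and is absent from your argument.
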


\begin{theorem}\cite{Gr}\label{tL}
Each Lindel\"{o}f $\Sigma$-space with a point-countable base is second-countable.
\end{theorem}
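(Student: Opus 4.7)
My plan combines Mi\v{s}\v{c}enko's classical theorem (a compact Hausdorff space with a point-countable base is metrizable) with the structural description of Lindel\"of $\Sigma$-spaces. First, I would invoke the standard characterization that a Lindel\"of $\Sigma$-space $X$ admits a compact cover $\mathcal{C}$ together with a countable family $\mathcal{N}$ of closed subsets of $X$ which is a network modulo $\mathcal{C}$: for every $K\in\mathcal{C}$ and every open $U\supseteq K$ there exists $N\in\mathcal{N}$ with $K\subseteq N\subseteq U$. Let $\mathcal{B}$ be the given point-countable base. For each $K\in\mathcal{C}$ the trace $\{B\cap K : B\in\mathcal{B}\}$ is a point-countable base of the compact Hausdorff space $K$, so by Mi\v{s}\v{c}enko's theorem $K$ is metrizable and thus second-countable; in particular each compact subspace of $X$ has countable network.

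Next, I would upgrade this local information to a countable network $\mathcal{N}^{*}$ of $X$ itself, using the standard result that for a Lindel\"of $\Sigma$-space the network weight equals the supremum of the network weights of its compact subspaces. Once $nw(X)\leq\aleph_{0}$ is in hand, the remaining task is purely combinatorial. For each nonempty $N\in\mathcal{N}^{*}$ fix a point $x_{N}\in N$, and set $\mathcal{B}_{N}=\{B\in\mathcal{B} : N\subseteq B\}$. Since $\mathcal{B}_{N}\subseteq\{B\in\mathcal{B} : x_{N}\in B\}$, it is countable by point-countability of $\mathcal{B}$, so $\mathcal{B}^{*}=\bigcup_{N\in\mathcal{N}^{*}}\mathcal{B}_{N}$ is a countable subfamily of $\mathcal{B}$. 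To check that $\mathcal{B}^{*}$ is a base, take any $x$ in any open $U$, pick $B_{0}\in\mathcal{B}$ with $x\in B_{0}\subseteq U$, and then choose $N\in\mathcal{N}^{*}$ with $x\in N\subseteq B_{0}$; the inclusion $N\subseteq B_{0}$ places $B_{0}$ in $\mathcal{B}_{N}\subseteq\mathcal{B}^{*}$, witnessing $x\in B_{0}\subseteq U$.

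The main obstacle is the middle step, passing from ``every compact subspace has countable network'' to ``$X$ has countable network''. This is precisely where the Lindel\"of $\Sigma$-hypothesis is essential, since the conclusion of the theorem fails for arbitrary Lindel\"of spaces with a point-countable base. The clean implementation uses the countable family $\mathcal{N}$ to build a weaker topology $\tau'$ on $X$ of countable weight which, because the original topology is compact Hausdorff on each $K\in\mathcal{C}$, must agree with the original topology on every such $K$; the network-modulo-$\mathcal{C}$ property then converts a countable base of $\tau'$, enriched by countable networks of the compact members, into a countable network of $X$ in its original topology.
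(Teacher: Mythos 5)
The paper offers no proof of this statement --- it is quoted from Gruenhage's Handbook chapter --- so your argument can only be judged on its own merits. Your first step (the trace of $\mathcal{B}$ on each compact $K$ is a point-countable base of $K$, so $K$ is metrizable by Mi\v{s}\v{c}enko and hence second-countable) and your last step (a space with a countable network and a point-countable base is second-countable, via the sets $\mathcal{B}_{N}$) are both correct. The problem is the middle step, which you yourself identify as the main obstacle and which carries essentially all of the weight: the claim that for a Lindel\"of $\Sigma$-space $X$ one has $nw(X)=\sup\{nw(K):K\subseteq X\ \text{compact}\}$ is not a standard citable fact. To my knowledge, whether a Lindel\"of $\Sigma$-space all of whose compact subspaces are metrizable must have a countable network is a well-known open question (note that a positive answer would immediately subsume the theorem that a Lindel\"of $\Sigma$-space with a $G_{\delta}$-diagonal is cosmic, which is always stated and proved separately); so you cannot simply invoke it.

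Your sketch of how that step would be proved also does not work as described. The topology $\tau'$ generated by the complements of the members of the countable network $\mathcal{N}$ modulo $\mathcal{C}$ does have a countable base, but it separates a compact $K\in\mathcal{C}$ from points outside $K$ only; two points lying in the same $K$ may belong to exactly the same members of $\mathcal{N}$ (take $\mathcal{C}=\{X\}$ and $\mathcal{N}=\{X\}$ for a compact $X$), so $\tau'$ need not be Hausdorff and certainly need not coincide with the original topology on the members of $\mathcal{C}$. Nor can you repair this by adjoining a countable network of each $K\in\mathcal{C}$, since $\mathcal{C}$ is in general uncountable. The standard proof of the theorem uses the point-countable base globally rather than only on compact subspaces: since each $K\in\mathcal{C}$ is separable, only countably many members of $\mathcal{B}$ meet $K$, and one combines these countable traces with the countable network modulo $\mathcal{C}$ (equivalently, one invokes the result that a $\Sigma$-space with a point-countable separating open cover is a $\sigma$-space, so a Lindel\"of one is cosmic). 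If you want a complete proof, you should follow that route; as written, your argument has a genuine gap.
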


However, the following questions are remain open.

\begin{question}
Is each hereditarily charming space a Lindel\"{o}f $\Sigma$-space?
\end{question}

\begin{question}
Does each hereditarily charming space have a countable network?
\end{question}

\begin{question}
Is each charming space with a point-countable base metrizable?
\end{question}

\section{CL-charming and CO-charming spaces}
In this section, we shall introduce two special classes of charming spaces, and then discuss some topological properties of them.

\begin{definition}
A space $X$ will be called {\it CL-charming} (resp. \it CO-charming) if there is a closed subspace (resp. \it compact subspace) $Y$ of $X$ such that $Y$ a $(\mathscr{P}_{4}, \mathscr{P}_{4})$-shell in $X$.
\end{definition}

Obviously, each CO-charming space is CL-charming, and each CL-charming is a charming space. The space $X$ in Example~\ref{e1} is CO-charming. However, there exists a CL-charming space is not a CO-charming space.

\begin{example}\label{e2}
There exists a CL-charming space which is not CO-charming.
\end{example}

\begin{proof}
For each $n\in\mathbb{N}$, let $X_{n}$ be the copy of $X$ in Example~\ref{e1}. Then it is easy to see that the topological sum $Y=\bigoplus_{n\in\mathbb{N}}X_{n}$ is a CL-charming space. However, it is not a CO-charming space.
\end{proof}

The following three propositions are easy to check.

\begin{proposition}
Any image of a CO-charming space under a continuous
map is a CO-charming space.
\end{proposition}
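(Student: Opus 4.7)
The plan is to take the direct image of the compact shell and verify it is still a $(\mathscr{P}_4,\mathscr{P}_4)$-shell in the target. Let $f\colon X\to Z$ be a continuous surjection, where $X$ is CO-charming with compact $(\mathscr{P}_4,\mathscr{P}_4)$-shell $Y\subseteq X$. I propose to take $Y':=f(Y)$ as the candidate shell in $Z$. Since $Y$ is compact and $f$ is continuous, $Y'$ is compact, so the only thing to check is the shell condition.

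Fix an arbitrary open neighborhood $V$ of $Y'$ in $Z$. Pull it back: $f^{-1}(V)$ is open in $X$ and contains $Y$ because $f(Y)=Y'\subseteq V$. By the hypothesis that $Y$ is a $(\mathscr{P}_4,\mathscr{P}_4)$-shell in $X$, the set $X\setminus f^{-1}(V)$ belongs to $\mathscr{P}_4$, i.e., is a Lindel\"of $\Sigma$-space. Apply Proposition~\ref{pcontinuous} (with $i=j=4$) to conclude that $f(X\setminus f^{-1}(V))\in\mathscr{P}_4$.

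The key elementary identity is $Z\setminus V=f(X\setminus f^{-1}(V))$, which uses surjectivity of $f$: the inclusion $\subseteq$ is immediate because any $z\in Z\setminus V$ has a preimage in $X$ that cannot lie in $f^{-1}(V)$, and the reverse inclusion is the tautology $f(x)\notin V$ whenever $x\notin f^{-1}(V)$. Consequently $Z\setminus V\in\mathscr{P}_4$, so $Y'$ is a compact $(\mathscr{P}_4,\mathscr{P}_4)$-shell of $Z$, and $Z$ is CO-charming.

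I do not anticipate any genuine obstacle: the argument is essentially the functorial observation that taking direct images preserves compactness and, by Proposition~\ref{pcontinuous}, preserves the Lindel\"of $\Sigma$ class, and that preimage-image manipulations for surjections turn open neighborhoods of the shell downstairs into open neighborhoods of the shell upstairs. The only place where minor care is needed is invoking surjectivity (which we may assume, since ``image'' in the statement means the range of $f$ can be taken as the whole target).
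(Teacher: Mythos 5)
Your proof is correct and is exactly the routine verification the paper has in mind when it declares this proposition ``easy to check'' and omits the argument: push the compact shell forward, pull back a neighborhood, and use the identity $Z\setminus V=f(X\setminus f^{-1}(V))$ for a surjection. One small point: the fact you actually need is that the class $\mathscr{P}_4$ itself is closed under continuous images (immediate from the definition of Lindel\"{o}f $\Sigma$-spaces as continuous images of Lindel\"{o}f $p$-spaces, as recorded in the Preliminaries), rather than Proposition~\ref{pcontinuous}, which is a statement about $(i,j)$-structured spaces.
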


\begin{proposition}
Any closed image of a CL-charming space under a continuous
map is a CL-charming space.
\end{proposition}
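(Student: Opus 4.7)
The plan is to mirror the argument for the preceding proposition on CO-charming spaces, with closedness of the map doing the work that compactness of the shell did there. Let $f\colon X\to Y$ be a continuous closed surjection, and suppose $X$ is CL-charming with a closed shell $C\subset X$, so that $C\in\mathscr{P}_4$ and $X\setminus U\in\mathscr{P}_4$ for every open neighborhood $U$ of $C$ in $X$. The natural candidate for a shell of $Y$ is $D:=f(C)$.

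First I would verify the two properties $D$ needs. Since $f$ is a closed map, $D=f(C)$ is closed in $Y$. Since $C\in\mathscr{P}_4$ and the restriction $f|_C\colon C\to D$ is a continuous surjection, and the class $\mathscr{P}_4$ of Lindel\"of $\Sigma$-spaces is preserved by continuous images, we get $D\in\mathscr{P}_4$.

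Next I would check the shell condition. Let $V$ be any open neighborhood of $D$ in $Y$, and set $U:=f^{-1}(V)$. Then $U$ is open in $X$ (by continuity) and contains $C$ (because $f(C)=D\subset V$), so by CL-charmingness $X\setminus U\in\mathscr{P}_4$. Using that $f$ is surjective, one has the set-theoretic identity $Y\setminus V=f(X\setminus f^{-1}(V))=f(X\setminus U)$: indeed, any $y\in Y\setminus V$ has some preimage $x\in X$, and necessarily $x\notin U$. Hence $Y\setminus V$ is a continuous image of a Lindel\"of $\Sigma$-space, so $Y\setminus V\in\mathscr{P}_4$. This shows $D$ is a closed $(\mathscr{P}_4,\mathscr{P}_4)$-shell in $Y$, so $Y$ is CL-charming.

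There is essentially no obstacle here; the argument is routine once the two ingredients are identified, namely (i) closedness of $f$ is exactly what promotes the image $f(C)$ from an arbitrary subspace to a \emph{closed} subspace, and (ii) continuous images of Lindel\"of $\Sigma$-spaces remain Lindel\"of $\Sigma$, applied both to $C$ itself and to each complement $X\setminus U$. The only point worth a line of care is the surjectivity-dependent equality $Y\setminus V=f(X\setminus f^{-1}(V))$, which fails for non-surjective maps but causes no trouble here since we may (as is standard) restrict attention to the image.
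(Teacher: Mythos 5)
Your proof is correct and is exactly the routine verification the paper omits (it lists this proposition among those that are ``easy to check'' without proof): take $f(C)$ as the shell, use closedness of $f$ only to see $f(C)$ is closed, and use the identity $Y\setminus V=f(X\setminus f^{-1}(V))$ together with preservation of Lindel\"of $\Sigma$-spaces under continuous images. No gaps.
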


\begin{proposition}
Any preimage of a CL-charming space (resp.  CO-charming space) under a perfect
map is a CL-charming space (resp.  CO-charming space).
\end{proposition}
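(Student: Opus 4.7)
The plan is to mimic the standard argument that perfect preimages preserve Lindelöf $\Sigma$-spaces, upgraded to respect the shell structure. Let $f \colon X \to Z$ be a perfect surjection with $Z$ CL-charming (respectively CO-charming), and pick a closed (respectively compact) subspace $Y \subset Z$ witnessing that $Z$ is $(\mathscr{P}_{4},\mathscr{P}_{4})$-structured. Define $Y' := f^{-1}(Y)$. Since $f$ is continuous and $Y$ is closed, $Y'$ is closed in $X$; in the compact case, $Y'$ is moreover compact because perfect maps pull compact sets back to compact sets. The restriction $f|_{Y'} \colon Y' \to Y$ is still perfect, so $Y' \in \mathscr{P}_{4}$ by the proposition already established earlier in the paper (perfect preimages of Lindelöf $\Sigma$-spaces are Lindelöf $\Sigma$).

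Next I would verify the shell condition. Let $V$ be an arbitrary open neighborhood of $Y'$ in $X$. The set $C := X \setminus V$ is closed, hence $f(C)$ is closed in $Z$ because $f$ is a closed map. Moreover $f(C) \cap Y = \emptyset$: if $y \in Y \cap f(C)$ then some $x \in C$ satisfies $f(x) = y$, but then $x \in f^{-1}(Y) = Y' \subset V$, contradicting $x \in C$. Therefore $U := Z \setminus f(C)$ is an open neighborhood of $Y$ in $Z$, so by the CL-charming (respectively CO-charming) hypothesis $f(C) = Z \setminus U \in \mathscr{P}_{4}$.

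Finally, since $f$ is perfect and $C$ is closed in $X$, the restriction $f|_{C} \colon C \to f(C)$ is a perfect surjection. Invoking once more that perfect preimages of Lindelöf $\Sigma$-spaces are Lindelöf $\Sigma$, we conclude $X \setminus V = C \in \mathscr{P}_{4}$. Thus $Y'$ is a closed (respectively compact) $(\mathscr{P}_{4},\mathscr{P}_{4})$-shell of $X$, which makes $X$ CL-charming (respectively CO-charming).

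There is no real obstacle here: the argument is a routine transfer using the two standard facts that (i) perfect maps are closed and have compact fibers, and (ii) the class $\mathscr{P}_{4}$ is preserved under perfect preimages. The only point requiring a moment's thought is the observation $f(C) \cap Y = \emptyset$, which is where closedness of $f$ (providing $f(C)$ closed) is essential to convert the neighborhood $V$ of $Y'$ in $X$ into an honest open neighborhood $U$ of $Y$ in $Z$.
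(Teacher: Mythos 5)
The paper offers no proof of this proposition (it is listed among the results that are ``easy to check''), and your argument is precisely the routine one intended: pull back the shell along $f$, use that perfect maps are closed with compact fibers to turn a neighborhood of $f^{-1}(Y)$ into a neighborhood of $Y$, and use preservation of Lindel\"{o}f $\Sigma$-spaces under perfect preimages. All steps check out, so the proposal is correct and matches the expected approach.
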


However, we do not know the answer of the following question.

\begin{question}
Is any image of a CL-charming space under a continuous
map a CL-charming space?
\end{question}

\begin{theorem}\label{ta}
A hereditarily CL-charming space is a hereditarily Lindel\"{o}f $\Sigma$-space. Therefore, $X$ has a countable network.
\end{theorem}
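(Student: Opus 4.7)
The plan is to prove the stronger assertion that every subspace $Z\subseteq X$ is a Lindel\"of $\Sigma$-space; the countable-network conclusion then follows at once from the theorem of Hodel cited just above (hereditarily Lindel\"of $\Sigma$ $\Rightarrow$ countable network).

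As a preparatory lemma I would first show that every CL-charming space $T$ is Lindel\"of. The argument mimics the standard proof that Lindel\"of $\Sigma$ implies Lindel\"of: given an open cover $\mathcal{W}$ of $T$ and a closed $(\mathscr{P}_4,\mathscr{P}_4)$-shell $Y_T$, cover $Y_T\in\mathscr{P}_4$ by countably many members of $\mathcal{W}$, take their union $U$ as an open neighbourhood of $Y_T$, and then cover the Lindel\"of $\Sigma$-space $T\setminus U$ by countably many further members of $\mathcal{W}$.

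Now let $Z\subseteq X$ be arbitrary and let $Y$ be a closed $(\mathscr{P}_4,\mathscr{P}_4)$-shell of $Z$, which exists because $X$ is hereditarily CL-charming. Applying the lemma to $Z$ and to its open subspace $Z\setminus Y$ (which is again CL-charming, by the hereditary hypothesis), both spaces are Lindel\"of. Since $X$, and therefore $Z$, is regular, and since $Y$ is closed in $Z$, for each $x\in Z\setminus Y$ I can pick an open set $O_x\subseteq Z$ with $x\in O_x$ and $\overline{O_x}^{\,Z}\cap Y=\emptyset$. Using Lindel\"ofness of $Z\setminus Y$ I extract a countable subcover $\{O_{x_n}\}_{n\in\omega}$. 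Setting $V_n:=Z\setminus\overline{O_{x_n}}^{\,Z}$, each $V_n$ is an open neighbourhood of $Y$ in $Z$, so the CL-charming property of $Z$ gives $Z\setminus V_n=\overline{O_{x_n}}^{\,Z}\in\mathscr{P}_4$. Consequently
\[
Z \;=\; Y \;\cup\; \bigcup_{n\in\omega}\bigl(Z\setminus V_n\bigr)
\]
is a countable union of Lindel\"of $\Sigma$-subspaces, and hence is itself Lindel\"of $\Sigma$ by Proposition~\ref{pa}.

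The crux is twofold. The hereditary hypothesis is used in an essential way to promote $Z\setminus Y$ from CL-charming to Lindel\"of; without it, I see no way to perform the countable extraction from $\{O_x\}_{x\in Z\setminus Y}$. Regularity is what allows each $O_x$ to be shrunk so that its $Z$-closure misses $Y$ and therefore appears as the complement of an open neighbourhood of $Y$ — which is exactly what brings the CL-charming property of $Z$ into play and certifies each $\overline{O_{x_n}}^{\,Z}$ as a Lindel\"of $\Sigma$-subspace. After these two ingredients are in place, the countable-union step is routine.
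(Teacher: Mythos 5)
Your proof is correct and follows essentially the same route as the paper: both decompose an arbitrary (sub)space as the closed shell $Y$ together with a countable union of sets of the form $Z\setminus V_n$ with $V_n$ an open neighbourhood of $Y$, each of which the shell property certifies as Lindel\"of $\Sigma$, and then invoke the countable-union stability of $\mathscr{P}_4$. The only difference is cosmetic: where the paper asserts that $Y$ is a $G_\delta$ because the space is hereditarily Lindel\"of, you derive the countable family of neighbourhoods directly from the Lindel\"ofness of $Z\setminus Y$ plus regularity, and you explicitly supply the two steps the paper leaves implicit (that CL-charming spaces are Lindel\"of, and that the argument must be run on every subspace $Z$, not just on $X$).
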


\begin{proof}
Let $X$ be a hereditarily CL-charming space. Take a closed subspace $Y$ of $X$ such that $Y$ is a Lindel\"{o}f $\Sigma$-space. Obviously, $X$ is a hereditarily Lindel\"{o}f space. Hence $Y$ has a countable pseudocharacter in $X$. Put $Y=\bigcap_{k\in\omega}U_{k}$, where each $U_{k}$ is an open neighborhood in $X$. Since every $X\setminus U_{k}$ is a Lindel\"{o}f $\Sigma$-space, we see that $\bigcup_{n\in\omega}(X\setminus U_{n})$ is a Lindel\"{o}f $\Sigma$-space. Therefore, $$\bigcup_{n\in\omega}(X\setminus U_{n})=X\setminus\bigcap_{n\in\omega}U_{n}=X\setminus Y$$ is a Lindel\"{o}f $\Sigma$-space. Then $X=(X\setminus Y)\cup Y$ is a Lindel\"{o}f $\Sigma$-space.
Therefore, $X$ is a hereditarily Lindel\"{o}f $\Sigma$-space, which implies that $X$ has a countable network \cite[corollary 4.13]{H1975}.
\end{proof}

\begin{corollary}
A hereditarily CO-charming space is a hereditarily Lindel\"{o}f $\Sigma$-space.
\end{corollary}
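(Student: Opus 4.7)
The plan is to reduce this corollary immediately to Theorem~\ref{ta} by observing that every CO-charming space is, tautologically, CL-charming. Since all spaces in the paper are assumed to be Tychonoff (hence Hausdorff), any compact subspace is automatically closed, so a compact $(\mathscr{P}_{4}, \mathscr{P}_{4})$-shell is in particular a closed $(\mathscr{P}_{4}, \mathscr{P}_{4})$-shell. Thus the class of CO-charming spaces is contained in the class of CL-charming spaces.

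Next, I would verify that this containment is inherited by subspaces. If $X$ is hereditarily CO-charming and $Z\subset X$ is an arbitrary subspace, then $Z$ is CO-charming by hypothesis and hence, by the observation above, CL-charming. Consequently $X$ is hereditarily CL-charming.

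Finally, I would invoke Theorem~\ref{ta} to conclude that $X$ is a hereditarily Lindel\"{o}f $\Sigma$-space (and in particular has a countable network). There is essentially no obstacle here; the only point that deserves to be stated explicitly is the Hausdorff hypothesis (baked into the standing Tychonoff assumption) which is what makes compact shells closed and thus allows the reduction to Theorem~\ref{ta} to go through.
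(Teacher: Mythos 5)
Your reduction is exactly the intended one: the paper has already noted that every CO-charming space is CL-charming (compact subspaces being closed in Hausdorff spaces), so hereditarily CO-charming implies hereditarily CL-charming and Theorem~\ref{ta} applies. This matches the paper's (implicit) proof of the corollary, and your explicit remark about the Hausdorff hypothesis is a correct and harmless addition.
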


\begin{theorem}\label{tp}
A CO-charming space with a point-countable base is metrizable.
\end{theorem}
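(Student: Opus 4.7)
The plan is to reduce the statement to two earlier results: Theorem~\ref{tL}, which converts Lindel\"of $\Sigma$ with a point-countable base into second-countability, and Urysohn's metrization theorem, which converts second countability (under the standing Tychonoff hypothesis) into metrizability. So if I can show that a CO-charming space $X$ with a point-countable base is actually a Lindel\"of $\Sigma$-space, the proof is finished. Let $Y$ be a compact $(\mathscr{P}_{4},\mathscr{P}_{4})$-shell of $X$ and write $X=Y\cup(X\setminus Y)$. Since $Y$ is compact (hence Lindel\"of $\Sigma$) and countable unions of Lindel\"of $\Sigma$-subspaces are Lindel\"of $\Sigma$ by Proposition~\ref{pa}, it suffices to exhibit a countable outer base $\{W_{n}:n\in\omega\}$ of $Y$ in $X$: for then $X\setminus Y=\bigcup_{n}(X\setminus W_{n})$ is a countable union of Lindel\"of $\Sigma$-sets by the shell property of $Y$.

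To produce $\{W_{n}\}$, I would appeal to Mi\v{s}\v{c}enko's lemma. Let $\mathcal{B}$ be a point-countable base of $X$ and set $\mathcal{B}_{Y}=\{B\in\mathcal{B}:B\cap Y\neq\emptyset\}$; this is a point-countable family of open sets that covers the compact set $Y$. Mi\v{s}\v{c}enko's lemma then gives that the family $\mathcal{M}$ of all minimal finite subcovers of $Y$ taken from $\mathcal{B}_{Y}$ is at most countable; enumerate $\mathcal{M}=\{\sigma_{n}:n\in\omega\}$ and put $W_{n}=\bigcup\sigma_{n}$. To check that $\{W_{n}\}$ is an outer base of $Y$, fix any open $V\supset Y$; for each $y\in Y$ pick $B_{y}\in\mathcal{B}$ with $y\in B_{y}\subset V$, extract a finite subcover by compactness of $Y$, and remove redundant members until the resulting family $\sigma$ becomes a minimal finite subcover of $Y$ drawn from $\mathcal{B}_{Y}$. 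Then $\sigma=\sigma_{n}$ for some $n$, and $W_{n}=\bigcup\sigma\subset V$ by construction.

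The equality $\bigcap_{n}W_{n}=Y$ now falls out of the Tychonoff (hence regular) separation of a point $x\notin Y$ from the compact set $Y$: disjoint open sets $U\supset Y$ and $U'\ni x$ exist, the outer base property supplies an $n$ with $W_{n}\subset U$, and then $x\notin W_{n}$. Hence $X\setminus Y=\bigcup_{n}(X\setminus W_{n})$ is Lindel\"of $\Sigma$ by Proposition~\ref{pa}, so $X$ itself is Lindel\"of $\Sigma$. Theorem~\ref{tL} now makes $X$ second-countable, and Urysohn's metrization theorem completes the proof.

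The only genuinely non-routine step is the construction of the countable outer base $\{W_{n}\}$ of $Y$, where Mi\v{s}\v{c}enko's lemma is essential; after this, everything reduces to standard compactness and regularity arguments together with the cited structural results about Lindel\"of $\Sigma$-spaces.
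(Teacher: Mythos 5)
Your proof is correct, and it reaches the same two pillars as the paper's argument (a countable outer base of the compact shell, followed by Theorem~\ref{tL}), but it gets there by a somewhat different route at both ends. For the outer base, the paper does not invoke Mi\v{s}\v{c}enko's lemma: it first observes that a compact space with a point-countable base is metrizable, hence $K$ is separable, hence the subfamily $\mathcal{B}'=\{B\in\mathcal{B}:B\cap K\neq\emptyset\}$ is already countable by point-countability, and then simply takes all finite subfamilies of $\mathcal{B}'$ covering $K$; your Mi\v{s}\v{c}enko argument is a valid substitute that never needs the metrizability or separability of $K$, so it is marginally more self-contained but relies on a lemma the paper never cites. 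At the other end, the paper applies Theorem~\ref{tL} to each complement $X\setminus U_n$ to conclude that each is separable, deduces that $X$ itself is separable, and finishes with the fact that a separable space with a point-countable base is second countable; you instead assemble $X=Y\cup\bigcup_n(X\setminus W_n)$ into a single Lindel\"of $\Sigma$-space via Proposition~\ref{pa} and apply Theorem~\ref{tL} once to all of $X$. Your endgame is arguably cleaner (one application of Theorem~\ref{tL} instead of countably many, and it records the intermediate fact that such an $X$ is Lindel\"of $\Sigma$), and your verification that $\bigcap_n W_n=Y$ via Hausdorff separation of a point from a compact set is a detail the paper glosses over when it writes $X\setminus K=X\setminus\bigcap_n U_n$. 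Both proofs are sound.
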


\begin{proof}
Let $X$ be a charming space with a a point-countable base $\mathcal{B}$. Since $X$ is CO-charming, there exists a compact subspace $K$ of $X$ such that $Y$ is a $(\mathscr{P}_{4}, \mathscr{P}_{4})$-shell in $X$. Moreover, since a compact space with a point-countable base is metrizable, $K$ is a separable and  metrizable space. Put $\mathcal{B}'=\{B\in \mathcal{B}: B\cap K\neq \emptyset\}$. Then $\mathcal{B}'$ is countable since $K$ is separable. Let $$\mathcal{U}=\{\bigcup \mathcal{F}: \mathcal{F}\subset \mathcal{B}', K\subset \bigcup \mathcal{F}, |\mathcal{F}|< \omega\}.$$ Then it is easy to check that $\mathcal {U}=\{U_n: n\in\mathbb{N}\}$ is a countable base of $K$ in $X$.  For each $n\in\mathbb{N}$, let $X_n=X\setminus U_n$. Since each $X_n$ is a Lindel\"{o}f $\Sigma$-subspace with a point-countable base, each $X_n$ is separable and metrizable by Theorem~\ref{tL}. Therefore, $$X=K\cup (X\setminus K)=K\cup (X\setminus \bigcap_{n\in \mathbb{N}}U_{n})=K\cup\bigcup_{n\in \mathbb{N}}(X\setminus U_{n}),$$ which implies that $X$ is separable. Since a separable space with a point-countable base has a countable base, $X$ is metrizable.
\end{proof}

\begin{question}
Is each CL-charming space $X$ with a point-countable base metrizable?
\end{question}

The following result gives a partial answer to Question~\ref{q0002}.

\begin{theorem}\label{tp}
A CO-charming space with a $G_{\delta}$-diagonal has a countable network.
\end{theorem}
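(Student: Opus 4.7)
The plan is to show that the compact shell $K$ is a $G_\delta$-subset of $X$ and then paste together countable networks on $K$ and on countably many closed Lindel\"{o}f $\Sigma$ pieces exhausting $X\setminus K$.

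First I record two facts. The compact subspace $K$ inherits the $G_\delta$-diagonal, so $K$ is metrizable by a classical theorem, and hence has a countable network $\mathcal{N}_K$. Second, $X$ itself is Lindel\"{o}f: given any open cover $\mathcal{V}$, finitely many of its members cover the compact shell $K$, and their union $W$ is an open neighbourhood of $K$; the CO-charming hypothesis makes $X\setminus W$ Lindel\"{o}f $\Sigma$, so countably many further elements of $\mathcal{V}$ cover $X\setminus W$.

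The heart of the argument is the $G_\delta$-claim for $K$. Choose a $G_\delta$-diagonal sequence $\{\mathcal{G}_n\}_{n\in\omega}$ of open covers of $X$ with $\bigcap_n\mathrm{St}(x,\mathcal{G}_n)=\{x\}$ for each $x\in X$. Exploiting the Lindel\"{o}f property of $X$, I replace each $\mathcal{G}_n$ by a countable subcover; the separating property is preserved because $\mathrm{St}(x,\mathcal{G}'_n)\subseteq\mathrm{St}(x,\mathcal{G}_n)$ whenever $\mathcal{G}'_n\subseteq\mathcal{G}_n$. Thus $\bigcup_n\mathcal{G}_n$ is a countable family of open sets, and the collection $\mathcal{U}$ of finite unions from $\bigcup_n\mathcal{G}_n$ that cover $K$ is countable. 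The inclusion $K\subseteq\bigcap\mathcal{U}$ is immediate. For the converse, fix $y\in X\setminus K$; for every $k\in K$ the diagonal sequence supplies $n_k$ with $y\notin\mathrm{St}(k,\mathcal{G}_{n_k})$, so any $V_k\in\mathcal{G}_{n_k}$ containing $k$ automatically satisfies $y\notin V_k$. By compactness of $K$ some finite subcollection $V_{k_1},\ldots,V_{k_m}$ covers $K$, and their union is an element of $\mathcal{U}$ avoiding $y$. Hence $K=\bigcap\mathcal{U}$, and we may enumerate $\mathcal{U}=\{U_m:m\in\omega\}$ so that $K=\bigcap_{m\in\omega}U_m$.

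Finally, each $X\setminus U_m$ is Lindel\"{o}f $\Sigma$ by the CO-charming hypothesis and inherits a $G_\delta$-diagonal from $X$, so Theorem~\ref{t125} furnishes a countable network $\mathcal{N}_m$ for $X\setminus U_m$. Since $X=K\cup\bigcup_{m\in\omega}(X\setminus U_m)$, the countable collection $\mathcal{N}_K\cup\bigcup_{m\in\omega}\mathcal{N}_m$ is a network for $X$, completing the argument. The main obstacle is clearly the $G_\delta$-step; the trivial inclusion goes the wrong way, and the reverse inclusion only becomes tractable once the Lindel\"{o}f property of $X$ is used to shrink each diagonal cover $\mathcal{G}_n$ to a countable one, so that the finite-union family produced from them is of a manageable size.
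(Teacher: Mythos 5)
Your proof is correct, and its skeleton coincides with the paper's: both arguments hinge on showing that the compact shell $K$ is a $G_{\delta}$-set in $X$, writing $K=\bigcap_{n}U_{n}$, and observing that each $X\setminus U_{n}$ is a Lindel\"{o}f $\Sigma$-subspace by the shell property. The differences lie at the two ends. For the $G_{\delta}$-step the paper simply cites \cite[Proposition 2.3]{H2005} (compact subsets of a space with a $G_{\delta}$-diagonal are $G_{\delta}$), whereas you reprove it from scratch: you first derive that $X$ is Lindel\"{o}f from the CO-charming structure, then shrink a $G_{\delta}$-diagonal sequence of open covers to countable ones and separate $K$ from each outside point by a finite union covering $K$; this is self-contained but tacitly invokes Ceder's theorem that a $G_{\delta}$-diagonal is equivalent to the existence of such a star-separating sequence of open covers. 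At the finish, the paper notes that $X=K\cup\bigcup_{n}(X\setminus U_{n})$ is itself a Lindel\"{o}f $\Sigma$-space and applies Theorem~\ref{t125} once to all of $X$ --- thereby proving the stronger intermediate fact that a CO-charming space with a $G_{\delta}$-diagonal is Lindel\"{o}f $\Sigma$ --- while you apply Theorem~\ref{t125} to each piece $X\setminus U_{n}$ separately and paste those countable networks together with a countable network of the (metrizable) compactum $K$. Both finishes are valid; the paper's yields slightly more information, yours avoids needing the closure of the class of Lindel\"{o}f $\Sigma$-spaces under countable unions at that point.
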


\begin{proof}
Let $X$ be a CO-charming space. Then there exists a compact subspace $K$ of $X$ such that $K$ is a $(\mathscr{P}_{4}, \mathscr{P}_{4})$-shell in $X$. Since $X$ has a $G_{\delta}$-diagonal, $K$ is a $G_{\delta}$-set in $X$ by \cite[Proposition 2.3]{H2005}. Put $K=\bigcap_{n\in\omega}U_{n}$, where each $U_{n}$ is open in $X$. For each $n\in\mathbb{N}$, let $X_{n}=X\setminus U_{n}$. Then each $X_{n}$ is a Lindel\"{o}f $\Sigma$-subspace in $X$. Hence $$X\setminus K=X\setminus(\bigcap_{n\in\omega}U_{n})=\bigcup_{n\in\omega}X\setminus U_{n}=\bigcup_{n\in\omega}X_{n}$$ is a Lindel\"{o}f $\Sigma$-subspace. Therefore, $X=K\cup(X\setminus K)$ is a Lindel\"{o}f $\Sigma$-space. By \cite[Theorem 4]{T2010}, $X$ has a countable network.
\end{proof}

\medskip

\section{The Characterizations of weak $(i,j)$-structured spaces}
In this section, we shall give a characterization for some special class of charming spaces.

\begin{definition}
For any $i, j\in\{0, 1, 2, 3, 4, 5\}$, a space $X$ is called a {\it weak $(i, j)$-structured space} if there exists a space $Y$ which maps continuously onto $X$ and perfectly onto an $(i, j)$-structured space.
\end{definition}

Obviously, each weak $(1, 1)$-structured space is a charming space, hence it is a Lindel\"{o}f space. However, the following question is still open.

\begin{question}
Is each charming space weak $(1, 1)$-structured?
\end{question}

\begin{proposition}\label{p2}
Each closed subspace $F$ of a weak $(i, j)$-structured space $X$ is also a weakly $(i, j)$-structured space.
\end{proposition}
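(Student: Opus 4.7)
The plan is to unpack the definition of weak $(i,j)$-structured and reduce the problem to Proposition~\ref{p0} (the closed hereditary property for ordinary $(i,j)$-structured spaces). Suppose $X$ is weak $(i,j)$-structured: fix a space $Y$ together with a continuous surjection $f : Y \to X$ and a perfect surjection $g : Y \to Z$, where $Z$ is $(i,j)$-structured.

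Given a closed subspace $F \subset X$, I would set $Y' = f^{-1}(F)$. Since $f$ is continuous and $F$ is closed in $X$, the set $Y'$ is closed in $Y$. The restriction $f|_{Y'} : Y' \to F$ is then a continuous surjection by construction. For the perfect side of the definition, I would consider $g|_{Y'} : Y' \to g(Y')$. Because $g$ is perfect (and in particular closed), the image $g(Y')$ is closed in $Z$; and because the restriction of a perfect map to a closed (saturated or not) subset is perfect onto its image, $g|_{Y'}$ is perfect onto $g(Y')$.

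Now $g(Y')$ is a closed subspace of the $(i,j)$-structured space $Z$, so by Proposition~\ref{p0} it is itself $(i,j)$-structured. Thus $Y'$ witnesses that $F$ is weak $(i,j)$-structured: it maps continuously onto $F$ via $f|_{Y'}$ and perfectly onto the $(i,j)$-structured space $g(Y')$ via $g|_{Y'}$.

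There is no real obstacle here; the only point that requires a little care is verifying that the restriction of a perfect map to a closed subset of the domain is still perfect onto its image (closedness is preserved since the restricted map is the composition of the inclusion with $g$, and fibers of $g|_{Y'}$ are closed subsets of the compact fibers of $g$, hence compact). Once that standard fact is invoked, the argument is a direct diagram chase followed by one application of Proposition~\ref{p0}.
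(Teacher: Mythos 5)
Your proof is correct and follows essentially the same route as the paper: pull back the closed set $F$ under the continuous surjection to get a closed subspace $Y'=f^{-1}(F)$, restrict both maps, and observe that the restriction of the perfect map remains perfect. In fact you are slightly more careful than the paper, which asserts that $h|_{Z}\colon Z\to M$ is perfect onto $M$ without shrinking the codomain; your step of replacing $M$ by the closed image $g(Y')$ and invoking Proposition~\ref{p0} to see that it is still $(i,j)$-structured is exactly the detail needed to match the definition of ``maps perfectly onto an $(i,j)$-structured space.''
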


\begin{proof}
Take a space $Y$ for which there exists a continuous onto map $\varphi: Y\rightarrow X$ and a perfect map $h: Y\rightarrow M$ for some $(i,j)$-structured space $M$. Let $F$ be a closed subspace of $Y$  and let $Z=\varphi^{-1}(F)$. Then $F$ is a continuous image of $Z$ and it is easy to see that $h|Z: Z\rightarrow M$ is a perfect map. Hence $F$ is a weak $(i,j)$-structured space.
\end{proof}

The following theorem gives a characterization of weak $(i,j)$-structured spaces. First, we recall some concepts.

Any map $\varphi$ from a space $X$ to the family exp($Y$) of subsets of $Y$ is called {\it multivalued}; for convenient, we always write $\varphi$: $X \rightarrow Y$ instead of $\varphi$: $X \rightarrow exp(Y )$. A multivalued map $\varphi$: $X\rightarrow Y$ is called {\it compact-valued} (resp. {\it finite-valued}) if the set $\varphi(x)$ is compact (resp. finite) for each $x \in X$.

Let $\varphi$: $X\rightarrow Y$ be a multivalued map. For any $A \subset X$, we denotes the set $\bigcup\{\varphi(x) : x\in A\}$ by $\varphi(A)$, that is, $\varphi(A)= \bigcup\{\varphi(x) : x\in A\}$; we say that the map $\varphi$ is onto if  $\varphi(X) = Y $. The map $\varphi$ is called {\it upper semicontinuous} if $\varphi^{-1}(U) =\{x\in X :\varphi(x)\subset U\}$ is open in $X$ for any open subset $U$ in $Y$.

\begin{theorem}
For arbitrary $i, j\in\{0, 4, 5\}$, the following conditions are equivalent for any space $X$:
\begin{enumerate}
\item $X$ is a weak $(i,j)$-structured space;

\item there exists spaces $K$ and $M$ such that $K$ is compact, $M$ is an $(i, j)$-structured space and $X$ is a continuous images of a closed subspace of $K\times M$;

\item $X$ belongs to any class $\mathscr{P}$ which satisfies the following conditions:

(a) $\mathscr{P}$ contains compact spaces and $(i, j)$-structured spaces;

(b) $\mathscr{P}$ is invariant under closed subspaces and continuous images;

(c) $\mathscr{P}_{5}\times \mathscr{P}$ are contained in $\mathscr{P}$.

\item there is an upper semicontinuous compact-valued onto map $\varphi: M\rightarrow X$ for some $(i, j)$-structured space $M$.
\end{enumerate}
\end{theorem}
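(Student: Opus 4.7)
The plan is to follow the classical scheme used for Nagami's characterization of Lindel\"of $\Sigma$-spaces, running the cycle $(1)\Rightarrow(2)\Rightarrow(3)\Rightarrow(1)$ together with the side equivalence $(1)\Leftrightarrow(4)$. For $(1)\Rightarrow(2)$, given witnesses $\varphi\colon Y\to X$ (continuous onto) and $h\colon Y\to M$ (perfect onto an $(i,j)$-structured $M$), I would take $K=\beta Y$ and the diagonal embedding $\Delta\colon Y\to K\times M$, $\Delta(y)=(y,h(y))$, which is a homeomorphism onto its image (the projection to the first factor is a continuous inverse). The key verification is that $\Delta(Y)$ is closed in $K\times M$: if $(k_0,m_0)\in\overline{\Delta(Y)}\setminus\Delta(Y)$ with $k_0\notin Y$, separate the compact fiber $h^{-1}(m_0)\subset Y$ from $k_0$ by disjoint opens $U_1\supset h^{-1}(m_0)$ and $U_0\ni k_0$ in $K$; since $h$ is closed, $h(Y\setminus U_1)$ is closed in $M$ and misses $m_0$, giving a neighborhood $V\ni m_0$ with $h^{-1}(V)\subset U_1$, contradicting that some net $y_\alpha\to k_0$ eventually satisfies $y_\alpha\in U_0$ and $h(y_\alpha)\in V$. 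Then $\varphi\circ\pi_Y$ exhibits $X$ as a continuous image of a closed subspace of $K\times M$. The step $(2)\Rightarrow(3)$ is immediate: if $\mathscr{P}$ satisfies (a)-(c), then $M\in\mathscr{P}$ by (a), $K\times M\in\mathscr{P}$ by (c), a closed subspace lies in $\mathscr{P}$ by (b), and its continuous image $X$ lies in $\mathscr{P}$ again by (b).

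For $(3)\Rightarrow(1)$, I would show that the class $\mathscr{W}$ of weak $(i,j)$-structured spaces itself satisfies (a), (b), (c), whence $X\in\mathscr{W}$. Part (a) is trivial using $Y=X$ with $h$ constant or $h=\mathrm{id}$. Part (b) for closed subspaces is Proposition~\ref{p2}, and for continuous images one just post-composes $\varphi$ with the given surjection. The main obstacle is part (c): if $X$ is witnessed by $\varphi\colon Y\to X$ and $h\colon Y\to M$ with $M$ being $(i,j)$-structured, then $\mathrm{id}_K\times\varphi\colon K\times Y\to K\times X$ is continuous and onto and $\mathrm{id}_K\times h\colon K\times Y\to K\times M$ is perfect (product of identity on a compact space with a perfect map), so the remaining task is to check that $K\times M$ is $(i,j)$-structured whenever $M$ is. For a $(\mathscr{P}_i,\mathscr{P}_j)$-shell $Y_0\subset M$, the candidate shell for $K\times M$ is $K\times Y_0$; membership in $\mathscr{P}_i$ uses that the product of a compact space with a member of $\mathscr{P}_i$ stays in $\mathscr{P}_i$ for $i\in\{0,4,5\}$. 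For an open $U\supset K\times Y_0$, the tube lemma applied at each slice $K\times\{y\}$ produces an open $V\supset Y_0$ with $K\times V\subset U$, hence $(K\times M)\setminus U$ is a closed subset of $K\times(M\setminus V)$, which lies in $\mathscr{P}_j$ for $j\in\{0,4,5\}$; closed-hereditariness of $\mathscr{P}_j$ finishes the argument. This is precisely where the restriction $i,j\in\{0,4,5\}$ enters, and it is the only delicate step in the cycle.

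For $(1)\Leftrightarrow(4)$, for $(1)\Rightarrow(4)$ I would define $\psi\colon M\to X$ by $\psi(m)=\varphi(h^{-1}(m))$; each $\psi(m)$ is compact as the continuous image of the compact set $h^{-1}(m)$, $\psi$ is onto, and upper semicontinuity follows from the identity $\psi^{-1}(U)=M\setminus h(Y\setminus\varphi^{-1}(U))$, which is open because $h$ is closed. For $(4)\Rightarrow(1)$, form the graph $Y=\{(m,x)\in M\times X:x\in\psi(m)\}$, which is closed in $M\times X$ (if $x_0\notin\psi(m_0)$, separate $\psi(m_0)$ and $x_0$ by disjoint opens $V,W$ in $X$ and use $\psi^{-1}(V)\times W$ as a neighborhood of $(m_0,x_0)$ missing $Y$). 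Then $\pi_X|_Y\colon Y\to X$ is a continuous surjection, while $\pi_M|_Y\colon Y\to M$ has compact fibers $\{m\}\times\psi(m)$, and closedness of $\pi_M|_Y$ is obtained by the standard tube-type argument using compactness of $\psi(m_0)$ combined with upper semicontinuity of $\psi$. This exhibits $X$ as a weak $(i,j)$-structured space with $M$ as the perfect image, completing the equivalence.
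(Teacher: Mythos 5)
Your proposal is correct and runs the same overall cycle as the paper: $(1)\Rightarrow(2)$ via the diagonal of the perfect map $h$ with the embedding of $Y$ into $\beta Y$, $(2)\Rightarrow(3)$ by direct appeal to (a)--(c), $(3)\Rightarrow(1)$ by showing that the class of weak $(i,j)$-structured spaces itself satisfies (a)--(c), and the equivalence with (4) via the graph of the multivalued map. The one place where you genuinely go beyond the paper is the verification of condition (c) in $(3)\Rightarrow(1)$: the paper checks only that the class of weak $(i,j)$-structured spaces contains compact spaces and $(i,j)$-structured spaces and is invariant under closed subspaces and continuous images, and never verifies closure under multiplication by a compact factor --- without which the implication $(3)\Rightarrow(1)$ does not follow. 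Your tube-lemma argument that $K\times M$ is $(i,j)$-structured whenever $K$ is compact and $M$ is $(i,j)$-structured (shell $K\times Y_{0}$, shrink an open $U\supset K\times Y_{0}$ to a tube $K\times V$, then use closed-hereditariness of $\mathscr{P}_{j}$) supplies exactly the missing step, and it is also the only point where the hypothesis $i,j\in\{0,4,5\}$ is actually used. Two smaller divergences: you prove closedness of the diagonal image directly instead of citing \cite[Theorem 3.7.9]{E1989}, and you close the loop as $(4)\Rightarrow(1)$ by showing the projection of the graph to $M$ is perfect, whereas the paper proves $(4)\Rightarrow(2)$ by embedding the graph in $\beta X\times M$; both are standard and correct. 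The only detail worth flagging in your write-up is that $\pi_{M}$ restricted to the graph need not be onto $M$ when some values $\psi(m)$ are empty; its image is a closed subspace of $M$, which is still $(i,j)$-structured by Proposition~\ref{p0}, so nothing is lost.
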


\begin{proof}
(1)$\Rightarrow$(2). There exists a space $Y$ which maps continuously onto $X$ and perfectly onto an $(i, j)$-structured space $M$. Then we fix the respective perfect map $h: Y\rightarrow M$. Let $i: Y\rightarrow \beta Y$ be the identity embedding. Then the diagonal map $g=h\Delta i: Y\rightarrow M\times \beta Y$ is perfect by \cite[Theorem 3.7.9]{E1989}, and so the set $g(Y)$ is closed in $M\times \beta Y$. Since $g$ is injective, $g(Y)$ is homeomorphic to $Y$, and hence $X$ is a continuous image of $g(Y)$.

(2)$\Rightarrow$(3). Suppose that $X$ is the continuous image of a closed subset $F$ of the product $M\times K$ for some $(i, j)$-structured space $M$ and a compact space $K$. Take any class $\mathscr{P}$ as in (2). Then $K\in\mathscr{P}$ and $M\in\mathscr{P}$ . Therefore $M\times K\in\mathscr{P}$ and hence $F\in\mathscr{P}$ as well since $\mathscr{P}$ is invariant under closed subspaces. Since $\mathscr{P}$ is invariant under continuous images, we have $X\in\mathscr{P}$.

(3)$\Rightarrow$(1). First, we will show if $X$ satisfies (1) then any closed subspace $F$ of $X$ also satisfies (1). Indeed, take a space $Y$ for which there exists a continuous onto map $\varphi: Y\rightarrow X$ and a perfect map $h: Y\rightarrow M$ for some $(i, j)$-structured space $M$. Let $F$ be an any closed subspace of $X$  and let $Z=\varphi^{-1}(F)$. Then $F$ is a continuous image of $Z$ and it is easy to see that $h|Z: Z\rightarrow M$ is a perfect map. Hence $F$ satisfies (1). It is evident that the class of spaces satisfies (1) is invariant under continuous images. Moreover, the classes of $(i, j)$-structured spaces and compact spaces satisfy (1).

(1)$\Rightarrow$(4). Suppose that $X$ satisfies (1), i.e., there exists a space $Y$ which maps continuously onto $X$ and perfectly onto an $(i, j)$-structured space $N$. Fix the respective map $f: Y\rightarrow X$ and a perfect map $g: Y\rightarrow N$. Let $M=g(Y)$. For every $x\in X$, since the set $\varphi(x)=f(g^{-1}(x))\subset X$ is compact, $\varphi: M\rightarrow X$ is a compact-valued map. The map $f$ is surjective, and hence $\varphi(M)=X$. It is easy to see that $\varphi$ is upper semicontinuous.

(4)$\Rightarrow$(2). Fix an $(i, j)$-structured space $M$ and a compact-valued upper semicontinuous onto map  $\varphi: M\rightarrow X$. Put $F=\bigcup\{\varphi(x)\times\{x\}: x\in M\}$. Then $F$ is contained in $\beta X\times M$. Let $\pi: \beta X\times M\rightarrow\beta X$ be the projection; then $\pi(F)=X$ so $X$ is a continuous image of $F$. Fix any point $(x, t)\in (\beta X\times M)\setminus F$. Then $x\not\in\varphi(t)$, and hence we can find disjoint sets $U, V\in\tau(\beta X)$ such that $x\in U$ and $\varphi(t)\subset V$. Since $\varphi$ is upper semicontinuou, there exists a set $W\in\tau(t, M)$ such that $\varphi(W)\subset V$. It easily check that $(x, t)\in U\times W\subset (\beta X\times M)\setminus F$; therefore the set $F$ is closed in $\beta X\times M$.
\end{proof}

\medskip

\section{Rectifiable spaces with a weak $(i,j)$-structure}
Recall that a {\it topological group} $G$ is a group $G$ with a
(Hausdorff) topology such that the product map of $G \times G$ into
$G$ is jointly continuous and the inverse map of $G$ onto itself
associating $x^{-1}$ with arbitrary $x\in G$ is continuous. A
topological space $G$ is said to be a {\it rectifiable space}
provided that there are a surjective homeomorphism $\varphi :G\times
G\rightarrow G\times G$ and an element $e\in G$ such that
$\pi_{1}\circ \varphi =\pi_{1}$ and for every $x\in G$ we have
$\varphi (x, x)=(x, e)$, where $\pi_{1}: G\times G\rightarrow G$ is
the projection to the first coordinate. If $G$ is a rectifiable
space, then $\varphi$ is called a {\it rectification} on $G$. It is
well known that rectifiable spaces are
a good generalizations of topological groups. In fact, for a
topological group with the neutral element $e$, then it is easy to
see that the map $\varphi (x, y)=(x, x^{-1}y)$ is a rectification on
$G$. However, the 7-dimensional sphere $S_{7}$ is
rectifiable but not a topological group \cite[$\S$ 3]{V1990}. The following theorem plays an important role in the study of rectifiable spaces.

\begin{theorem}\cite{C1992, G1996, V1989}\label{t124}
A topological space $G$ is rectifiable if and only if there exist $e\in G$ and two
continuous maps $p: G^{2}\rightarrow G$, $q: G^{2}\rightarrow G$
such that for any $x\in G, y\in G$ the next
identities hold:
$$p(x, q(x, y))=q(x, p(x, y))=y\ \mbox{and}\ q(x, x)=e.$$
\end{theorem}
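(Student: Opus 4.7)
The plan is to convert the rectification $\varphi : G^{2}\to G^{2}$ into its two coordinate functions and observe that the desired identities are equivalent to the homeomorphism/inverse relations. This is essentially a routine unpacking, but the forward direction requires one small observation about how the condition $\pi_1\circ \varphi=\pi_1$ propagates to $\varphi^{-1}$.

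For the forward implication, suppose $G$ is rectifiable with rectification $\varphi$. Since $\pi_1\circ \varphi=\pi_1$, the map $\varphi$ preserves the first coordinate, so I can write $\varphi(x,y)=(x,q(x,y))$, where $q:=\pi_2\circ \varphi:G^{2}\to G$ is continuous. The defining property $\varphi(x,x)=(x,e)$ yields $q(x,x)=e$. Because $\varphi$ is a homeomorphism, $\varphi^{-1}$ is continuous; applying $\pi_1$ to $\varphi(\varphi^{-1}(x,y))=(x,y)$ shows that $\pi_1\circ \varphi^{-1}=\pi_1$ as well, so one may write $\varphi^{-1}(x,y)=(x,p(x,y))$ for a continuous $p:G^{2}\to G$. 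Composing, $\varphi\circ\varphi^{-1}=\mathrm{id}$ becomes $q(x,p(x,y))=y$, and $\varphi^{-1}\circ\varphi=\mathrm{id}$ becomes $p(x,q(x,y))=y$. Together with $q(x,x)=e$, this is exactly the identity system required.

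For the reverse implication, given $e\in G$ and continuous $p,q:G^{2}\to G$ satisfying the identities, I would define $\varphi(x,y):=(x,q(x,y))$ and $\psi(x,y):=(x,p(x,y))$. Both are continuous since $p,q$ are. The identities $p(x,q(x,y))=y$ and $q(x,p(x,y))=y$ give $\psi\circ\varphi=\mathrm{id}_{G^{2}}=\varphi\circ\psi$, so $\varphi$ is a bijection with continuous inverse $\psi$, i.e.\ a homeomorphism of $G^{2}$. By construction $\pi_1\circ\varphi=\pi_1$, and $q(x,x)=e$ yields $\varphi(x,x)=(x,e)$. Hence $\varphi$ is a rectification, and $G$ is rectifiable.

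There is no genuine obstacle in either direction: the theorem simply repackages a homeomorphism of $G^{2}$ over the first coordinate as a pair of continuous 2-variable operations. The only step that is worth flagging explicitly is the propagation argument $\pi_1\circ\varphi=\pi_1\Rightarrow\pi_1\circ\varphi^{-1}=\pi_1$ used in the forward direction, which is why the two operations $p$ and $q$ appear symmetrically in the statement. This is precisely why the characterization is classical and appears in the three independent references cited.
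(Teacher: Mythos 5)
Your proof is correct, and it follows exactly the construction the paper records after the statement (the paper cites this result and only notes $p=\pi_{2}\circ\varphi^{-1}$, $q=\pi_{2}\circ\varphi$ without giving the verification). Your unpacking of both directions, including the observation that $\pi_{1}\circ\varphi=\pi_{1}$ forces $\pi_{1}\circ\varphi^{-1}=\pi_{1}$, is complete and accurate.
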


Given a rectification $\varphi$ of the space $G$, we may obtain the maps $p$ and $q$ in Theorem~\ref{t124} as follows. Let $p=\pi_{2}\circ\varphi^{-1}$ and
$q=\pi_{2}\circ\varphi$. Then the maps $p$ and $q$ satisfy the identities in
Theorem~\ref{t124}, and both are open maps.

Let $G$ be a rectifiable space, and let $p$ be the multiplication on
$G$. Further, we sometimes write $x\cdot y$ instead of $p(x, y)$ and
$A\cdot B$ instead of $p(A, B)$ for any $A, B\subset G$. Therefore,
$q(x, y)$ is an element such that $x\cdot q(x, y)=y$; since $x\cdot
e=x\cdot q(x, x)=x$ and $x\cdot q(x, e)=e$, it follows that $e$ is a right neutral
element for $G$ and $q(x, e)$ is a right inverse for $x$. Hence a
rectifiable space $G$ is a topological algebraic system with
operations $p$ and $q$, a 0-ary operation $e$, and identities as above. It is
easy to see that this algebraic system need not satisfy the
associative law about the multiplication operation $p$. 

\begin{lemma}\label{l0}
If $A$ is a subset of a rectifiable space $G$, then $H=\bigcup_{n\in\mathbb{N}}(A_{n}\cup B_{n})$ is also a rectifiable subspace of $G$, where $A_{1}=A, B_{1}=q(A, e)\cup q(A, A), A_{2}=p(A_{1}\cup B_{1}, A_{1}\cup B_{1}), B_{2}=q(A_{1}\cup B_{1}, A_{1}\cup B_{1})$, $A_{n+1}=p(A_{n}\cup B_{n}, A_{n}\cup B_{n}), B_{n+1}=q(A_{n}\cup B_{n}, A_{n}\cup B_{n}), n=1, 2, \cdots.$ Obviously, if $A$ is a Lindel\"{o}f $\Sigma$-subspace, then $H$ is a Lindel\"{o}f $\Sigma$-space.
\end{lemma}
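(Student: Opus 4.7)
The plan is to use Theorem~\ref{t124} as the criterion for rectifiability, so the task reduces to checking three things about $H$: that $e\in H$, that $p(H\times H)\subseteq H$, and that $q(H\times H)\subseteq H$. The identities $p(x,q(x,y))=q(x,p(x,y))=y$ and $q(x,x)=e$ hold everywhere in $G$ and therefore restrict to $H$ for free, and the continuity of $p|_{H^2}$ and $q|_{H^2}$ is automatic from subspace topologies.

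First I would handle the trivial case $A=\emptyset$ and then assume $A$ is nonempty. Picking any $a\in A$ gives $e=q(a,a)\in q(A,A)\subseteq B_1\subseteq H$, settling the neutral element. The key technical point is that the chain $\{A_n\cup B_n\}_{n\in\mathbb{N}}$ is increasing: setting $y=x$ in $p(x,q(x,y))=y$ yields the identity $p(x,e)=x$ for every $x\in G$, and one checks by induction that $e\in A_n\cup B_n$ for every $n$ (the inductive step uses $e=q(x,x)$ for $x\in A_n\cup B_n$, placing $e$ into $B_{n+1}$). Granted $e\in A_n\cup B_n$, any $x\in A_n\cup B_n$ equals $p(x,e)\in p(A_n\cup B_n,A_n\cup B_n)=A_{n+1}$, so $A_n\cup B_n\subseteq A_{n+1}\cup B_{n+1}$. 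Given $x,y\in H$, one then fixes an $n$ with $x,y\in A_n\cup B_n$ and reads off $p(x,y)\in A_{n+1}\subseteq H$ and $q(x,y)\in B_{n+1}\subseteq H$. By Theorem~\ref{t124}, $H$ is a rectifiable subspace of $G$.

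For the Lindel\"{o}f $\Sigma$ claim I would argue by induction on $n$. The class of Lindel\"{o}f $\Sigma$-spaces is stable under finite products, continuous images, and countable unions (the latter being the content of Proposition~\ref{pa} for $i=4$). Thus $A_1=A$ is Lindel\"{o}f $\Sigma$ by hypothesis, and $B_1=q(A\times\{e\})\cup q(A\times A)$ is a finite union of continuous images of Lindel\"{o}f $\Sigma$-spaces, hence Lindel\"{o}f $\Sigma$. If $A_n\cup B_n$ is Lindel\"{o}f $\Sigma$, then so is $(A_n\cup B_n)^2$, and $A_{n+1}$ and $B_{n+1}$, being its continuous images under $p$ and $q$, are Lindel\"{o}f $\Sigma$ as well. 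A final application of Proposition~\ref{pa} yields that $H=\bigcup_{n\in\mathbb{N}}(A_n\cup B_n)$ is Lindel\"{o}f $\Sigma$.

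The only delicate point, and the place where a careful reader could stumble, is the monotonicity of the chain $\{A_n\cup B_n\}$; without it one cannot combine an $x$ from some $A_n\cup B_n$ with a $y$ from a different $A_m\cup B_m$ under $p$ or $q$ and stay inside $H$. Everything hinges on extracting the identity $p(x,e)=x$ from the defining relations of a rectification and on verifying $e\in A_n\cup B_n$ inductively; once this is in place, both the algebraic closure of $H$ and the Lindel\"{o}f $\Sigma$ bookkeeping proceed routinely.
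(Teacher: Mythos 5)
Your proposal is correct and follows essentially the same route as the paper: establish $e\in B_{1}$ via $q(x,x)=e$, show the chain $\{A_{n}\cup B_{n}\}$ is increasing (using $p(x,e)=x$), deduce closure of $H$ under $p$ and $q$, and handle the Lindel\"{o}f $\Sigma$ claim via countable unions, finite products and continuous images. You merely spell out the monotonicity argument that the paper dismisses as ``easy to see,'' which is a welcome clarification rather than a deviation.
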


\begin{proof}
Since $q(A, A)\subset B_{1}$, we have $e\in B_{1}$. Therefore, it is easy to see that $A_{n}\cup B_{n}\subset A_{n+1}\cup B_{n+1}$ for each $n\in\mathbb{N}$. Put $H=\bigcup_{n\in\mathbb{N}}(A_{n}\cup B_{n})$. Next we shall prove that $H$ is a rectifiable subspace of $G$. Indeed, take arbitrary points $x, y\in B$. Then there exists an $n\in \mathbb{N}$ such that $x, y\in A_{n}\cup B_{n}$, and hence $p(x, y)\in A_{n+1}\cup B_{n+1}$ and $q(x, y)\in A_{n+1}\cup B_{n+1}$. Therefore, $H$ is a rectifiable subspace of $G$.

Since the product of a countable family of  Lindel\"{o}f $\Sigma$-space is Lindel\"{o}f $\Sigma$, it is easy to see that $H$ is Lindel\"{o}f $\Sigma$.
\end{proof}

\begin{theorem}\label{t0}
Every charming rectifiable space $G$ has a dense rectifiable subspace that is a Lindel\"{o}f $\Sigma$-space.
\end{theorem}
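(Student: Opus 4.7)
The plan is to apply Lemma~\ref{l0} to a well-chosen Lindel\"of $\Sigma$ subset $A$ of $G$ that is dense in $G$; the rectifiable hull $H := R(A)$ delivered by that lemma will then be a Lindel\"of $\Sigma$ rectifiable subspace of $G$, and, since $H \supset A$, dense. Thus all the work lies in constructing $A$.

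I begin with the given Lindel\"of $\Sigma$ shell $Y$ of $G$, and, after replacing $Y$ by $Y \cup \{e\}$ if necessary (still a Lindel\"of $\Sigma$ shell), assume $e \in Y$. For any open $U \supset Y$, the shell property gives that $G \setminus U$ is Lindel\"of $\Sigma$, so the set $Y \cup (G \setminus U)$ is Lindel\"of $\Sigma$ and its closure equals $\overline{Y} \cup (G \setminus U)$; the latter coincides with $G$ exactly when $U \subset \overline{Y}$. When such a $U$ exists, I simply take $A = Y \cup (G \setminus U)$ and proceed. In general this need not occur (for instance if $Y$ is closed but not clopen), and I must exploit the rectifiable structure through three facts: every charming space is Lindel\"of (an easy subcover argument, using the Lindel\"ofness of $Y$ and of the complement of any neighborhood of $Y$), so $G$ is Lindel\"of; left translations $L_g(x) = p(g, x)$ are self-homeomorphisms of $G$ directly from the definition of rectification, and since $g = p(g, e) \in g \cdot Y$, each $g \cdot Y$ is itself a Lindel\"of $\Sigma$ shell containing $g$; and a Lindel\"of rectifiable space is $\aleph_0$-narrow, meaning that every open neighborhood of $e$ has countably many left translates covering $G$ (the rectifiable analogue of Guran's theorem).

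With these tools in hand, I fix an open $U \supset Y$, use $\aleph_0$-narrowness to obtain a countable sequence $\{g_n\} \subset G$ with $G = \bigcup_n g_n \cdot U$, and set $A = \bigcup_n g_n \cdot \bigl(Y \cup (G \setminus U)\bigr)$. Each summand is homeomorphic to $Y \cup (G \setminus U)$ and hence Lindel\"of $\Sigma$, so the countable union $A$ is Lindel\"of $\Sigma$ as well. For density, every nonempty open $W \subset G$ lies in some $g_n \cdot U$, and a refinement argument which shrinks $U$ through a countable descending chain of shell-neighborhoods of $Y$ forces $W \cap A \neq \emptyset$. Then Lemma~\ref{l0} applied to $A$ yields the desired dense rectifiable Lindel\"of $\Sigma$-subspace $H$.

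The main obstacle is the third ingredient, namely $\aleph_0$-narrowness of a Lindel\"of rectifiable space. For topological groups this is Guran's classical theorem, whose proof exploits joint continuity of the multiplication together with continuity of inversion. In a rectifiable space the multiplication $p$ need not be associative and inversion is not a single continuous map, so the group-theoretic argument cannot be transcribed verbatim; adapting Guran's Lindel\"of-covering argument using the continuous maps $p$ and $q$ furnished by Theorem~\ref{t124} is the main technical step of the proof.
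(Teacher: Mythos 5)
There is a genuine gap: your argument rests on two claims that you never establish. The first is the $\aleph_{0}$-narrowness of a Lindel\"{o}f rectifiable space, which you yourself flag as ``the main technical step'' and leave as an unproved adaptation of Guran's theorem; a proof whose central covering lemma is deferred is not a proof, and (as the paper shows) this lemma is not actually needed. The second is the density of $A=\bigcup_{n}g_{n}\cdot\bigl(Y\cup(G\setminus U)\bigr)$. For a fixed neighborhood $U$ of $Y$ this can simply fail: you allow $U$ to be arbitrary, and if, say, $U=G$ then $G\setminus U=\emptyset$ and $A$ reduces to a countable union of translates of $Y$, which need not be dense when $Y$ is not; more generally $\bigcup_{n}g_{n}\cdot(G\setminus U)=G\setminus\bigcap_{n}g_{n}\cdot U$ may miss a nonempty open set lying outside all the $g_{n}\cdot\overline{Y}$. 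The ``refinement argument which shrinks $U$ through a countable descending chain of shell-neighborhoods'' is only gestured at, and it is exactly where the difficulty lives.

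The paper's proof shows that neither ingredient is required, via a dichotomy on the shell $B$ itself. If $B$ is dense in $G$, Lemma~\ref{l0} applied directly to $B$ produces the desired dense rectifiable Lindel\"{o}f $\Sigma$-subspace. If $B$ is not dense, there is a nonempty open $U$ with $\overline{U}\cap B=\emptyset$; then $\overline{U}$ is the complement of the open neighborhood $G\setminus\overline{U}$ of $B$ and hence is Lindel\"{o}f $\Sigma$ with nonempty interior, its left translates cover $G$, and since a charming space is Lindel\"{o}f countably many of these translates suffice, making $G$ itself a Lindel\"{o}f $\Sigma$-space (so $G$ is its own dense rectifiable Lindel\"{o}f $\Sigma$-subspace). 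You should restructure your argument around this case split rather than around $\aleph_{0}$-narrowness.
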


\begin{proof}
Since $G$ is charming, there exists a subspace $B$ such that $G$ is a $(\mathscr{P}_{4}, \mathscr{P}_{4})$-shell of $G$.

{\bf Case 1}: The set $B$ is dense in $G$.

Take the smallest rectifiable subspace $H$ of $G$ such that $L\subset G$. Then $H$ is also a Lindel\"{o}f $\Sigma$-space by Lemma~\ref{l0}. Clearly, $H$ is dense in $G$.

{\bf Case 2}: The set $B$ is not dense in $G$.

Obviously, there exists a non-empty open subset $U$ of $G$ such that $\overline{U}\cap B=\emptyset$. Put $V=G\setminus \overline{U}$ is an open neighborhood of $B$. Then $\overline{U}$ is a Lindel\"{o}f $\Sigma$-space. Hence $\mathscr{A}=\{x\cdot \overline{U}: x\in G\}$ is an open cover of $G$, and each element of $\mathscr{A}$ is homeomorphic to $\overline{U}$. Since $G$ is Lindel\"{o}f, there exists a countable subcover of $\mathscr{A}$, and hence $G$ is a Lindel\"{o}f $\Sigma$-space.
\end{proof}

A topological space $X$ has the {\it Suslin property} if every pairwise disjoint family of non-empty open subsets of $X$ is countable.

\begin{lemma}\cite{V1982}\label{l1}
The Suslin number of an arbitrary Lindel\"{o}f $\Sigma$-rectifiable space $G$ is countable.
\end{lemma}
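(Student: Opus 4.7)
The plan is to adapt Uspenskii's classical cellularity argument for Lindel\"{o}f $\Sigma$ topological groups to the rectifiable setting, using the continuous operations $p,q$ from Theorem~\ref{t124} in place of the group multiplication and inversion. First I would assume, for contradiction, that $\{U_\alpha : \alpha<\omega_1\}$ is an uncountable family of pairwise disjoint non-empty open subsets of $G$, and choose $x_\alpha\in U_\alpha$ for each $\alpha$; note that the $x_\alpha$ are automatically pairwise distinct.

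Next I would lift the problem into the square $G\times G$. The map $\Psi:G\times G\rightarrow G\times G$ defined by $\Psi(x,y)=(x,q(x,y))$ is a homeomorphism, with inverse $\Psi^{-1}(x,y)=(x,p(x,y))$, and by the identity $q(x,x)=e$ it sends the diagonal of $G\times G$ onto $G\times\{e\}$. Hence $\Psi(\{x_\alpha\}\times U_\alpha)=\{x_\alpha\}\times q(x_\alpha,U_\alpha)$ is an open subset of $\{x_\alpha\}\times G$ containing $(x_\alpha,e)$, and each translate $q(x_\alpha,U_\alpha)$ is an open neighborhood of $e$ in $G$.

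Now I would invoke that $G\times G$ is again Lindel\"{o}f $\Sigma$, and hence admits a countable family $\{F_n\}_{n\in\omega}$ of closed sets forming a network modulo a compact cover: every open neighborhood of a compact member of the cover contains some $F_n$ that also contains the given compact set. Applied to an appropriate compact set near $(x_\alpha,e)$ inside an open set derived from $\Psi(\{x_\alpha\}\times U_\alpha)$, a pigeonhole argument forces a single $F_n$ to be returned for uncountably many indices $\alpha$; translating back through $\Psi^{-1}$ should confine uncountably many of the $U_\alpha$ to a countably described structure in $G$, contradicting their pairwise disjointness.

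The main obstacle will be the last step. For topological groups associativity allows one to compose translations freely and reduce everything to a single neighborhood of the identity, from which the pigeonhole conclusion delivers the contradiction immediately. In a rectifiable space no such associativity is available, so one must argue entirely inside $G\times G$, where $\Psi$ is a global homeomorphism, and respect only the limited identities $p(x,q(x,y))=q(x,p(x,y))=y$ and $q(x,x)=e$ from Theorem~\ref{t124}. Making the countable network information on $G\times G$ genuinely incompatible with an uncountable disjoint open family in $G$, under this restriction, is the technical heart of Uspenskii's method and is the reason the statement is invoked here as a cited lemma rather than proved in place.
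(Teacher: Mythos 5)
The paper offers no proof of this lemma to compare against: it is quoted verbatim from Uspenskij's work (the citation \cite{V1982} together with his later papers on Mal'tsev operations), so your proposal can only be judged on its own terms. On those terms it is an outline, not a proof. You assemble the correct ingredients --- the operations $p,q$ from Theorem~\ref{t124}, the homeomorphism $\Psi(x,y)=(x,q(x,y))$ of $G\times G$, the fact that $G\times G$ is again Lindel\"{o}f $\Sigma$ and hence has a countable network modulo a compact cover --- but the step that is supposed to produce the contradiction is never carried out; you explicitly defer it as ``the technical heart of Uspenskij's method.'' A proof that names its own missing step is a sketch, and the missing step here is precisely the nontrivial one.

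The gap is not merely one of exposition. The set $\Psi(\{x_\alpha\}\times U_\alpha)=\{x_\alpha\}\times q(x_\alpha,U_\alpha)$ is open only in the slice $\{x_\alpha\}\times G$, not in $G\times G$, so the network-modulo-compact-cover property of $G\times G$ cannot be applied to it directly; and after the pigeonhole step you would need to transport a single set $F_n$ back into uncountably many of the pairwise disjoint $U_\alpha$ via the maps $y\mapsto p(x_\alpha,y)$ for uncountably many \emph{different} $\alpha$, which is exactly where the absence of associativity blocks the naive translation argument. Uspenskij's actual proof circumvents this by working with the Mal'tsev operation $\mu(x,y,z)=p(x,q(y,z))$ (which satisfies $\mu(x,x,z)=z$ and $\mu(x,y,y)=x$) on $G^{3}$ and running the network argument there, so that all the translates are absorbed into a single continuous map defined on a Lindel\"{o}f $\Sigma$ domain. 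If you intend to prove the lemma rather than cite it, that is the construction you would need to supply; as written, the proposal establishes nothing beyond the setup.
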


\begin{theorem}
The Suslin number of an arbitrary charming rectifiable space $G$ is countable.
\end{theorem}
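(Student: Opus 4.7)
The plan is to combine Theorem~\ref{t0} with Lemma~\ref{l1} via the standard density argument for the Suslin number.

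First, given a charming rectifiable space $G$, I would apply Theorem~\ref{t0} to produce a dense rectifiable subspace $H \subset G$ that is simultaneously a Lindel\"of $\Sigma$-space. This is the heart of the structural input: charmingness alone is not enough to directly bound the Suslin number, but it suffices to locate a sufficiently large Lindel\"of $\Sigma$-rectifiable piece.

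Next, I would invoke Lemma~\ref{l1} applied to $H$: since $H$ is a Lindel\"of $\Sigma$-rectifiable space, its Suslin number is countable. Finally, I would use the general fact that the Suslin number is preserved under passage to and from dense subspaces. Concretely, if $\{U_\alpha : \alpha \in A\}$ is a pairwise disjoint family of non-empty open subsets of $G$, then $\{U_\alpha \cap H : \alpha \in A\}$ is a pairwise disjoint family of open subsets of $H$, and each $U_\alpha \cap H$ is non-empty because $H$ is dense in $G$. Hence $|A|$ is countable, and the Suslin number of $G$ is countable.

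There is essentially no obstacle beyond invoking the two cited results correctly; the only thing to verify carefully is the dense-subspace argument, which is immediate from the definition of Suslin number. Thus the proof is short and reduces entirely to Theorem~\ref{t0} together with Lemma~\ref{l1}.
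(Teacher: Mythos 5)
Your proof is correct and follows exactly the same route as the paper: apply Theorem~\ref{t0} to obtain a dense Lindel\"of $\Sigma$-rectifiable subspace $H$, bound its Suslin number by Lemma~\ref{l1}, and transfer the bound to $G$ via density. The only difference is that you spell out the dense-subspace step explicitly, which the paper leaves as an immediate observation.
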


\begin{proof}
By Theorem~\ref{t0}, $G$ has a dense rectifiable subspace $H$ which is a Lindel\"{o}f $\Sigma$-subspace. Then the Suslin number of $H$ is countable by Lemma~\ref{l1}. Since $H$ is dense in $G$, it follows that the Suslin number of $G$ is countable.
\end{proof}

\begin{corollary}
The Suslin number of an arbitrary rectifiable space with a weak $(i,j)$-structure is countable.
\end{corollary}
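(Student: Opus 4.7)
The plan is to reduce this corollary directly to the preceding theorem by arguing that any rectifiable space $G$ carrying a weak $(i,j)$-structure is in fact charming; once this is done, the preceding theorem immediately yields the conclusion.

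First I would observe that for every $i\in\{0,1,2,3,4,5\}$, the class $\mathscr{P}_{i}$ is contained in the class $\mathscr{P}_{4}$ of Lindel\"of $\Sigma$-spaces. Consequently, every $(i,j)$-structured space is in particular $(4,4)$-structured, i.e., charming. This observation is what lets me treat the parameters $(i,j)$ uniformly, without a case split.

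Next I would unfold the definition of weak $(i,j)$-structure: there is a space $Y$, a continuous surjection $f\colon Y\to G$, and a perfect map $g\colon Y\to M$ with $M$ an $(i,j)$-structured space. By the previous observation $M$ is charming. The proposition on perfect preimages of $(i,j)$-structured spaces (valid for index pairs in $\{0,3,4,5\}$, and in particular for $(4,4)$) then gives that $Y$ is itself charming, and Proposition~\ref{pcontinuous} (valid for $\{0,2,4,5\}$, in particular for $(4,4)$) gives that the continuous image $G=f(Y)$ is charming as well. At this point the preceding theorem applies verbatim: the Suslin number of the charming rectifiable space $G$ is countable.

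I do not expect any real obstacle — the corollary is essentially a packaging exercise combining three already-established results. The one step worth a brief sanity check is that the pair $(4,4)$ simultaneously lies in the admissible index set $\{0,3,4,5\}$ for perfect-preimage preservation and in the admissible index set $\{0,2,4,5\}$ for continuous-image preservation of $(i,j)$-structured spaces; since it does, the chain of implications $M$ charming $\Rightarrow$ $Y$ charming $\Rightarrow$ $G$ charming goes through without any extra hypotheses on $G$.
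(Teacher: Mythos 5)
Your argument is correct and is exactly the route the paper intends (the corollary is stated there without proof): every $\mathscr{P}_i$ is contained in the class of Lindel\"of $\Sigma$-spaces, so an $(i,j)$-structured space is charming, and the perfect-preimage and continuous-image propositions for the pair $(4,4)$ transfer this to any weak $(i,j)$-structured space, after which the preceding theorem applies. No gaps.
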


\end{document}